\documentclass[11pt]{article}

 \usepackage[T1]{fontenc}
 \usepackage{tgtermes}
 \usepackage{amsmath}
 \usepackage{graphicx}
\usepackage{framed}
\usepackage[normalem]{ulem}
\usepackage{amsthm,amssymb,amsfonts}
\usepackage{enumerate}
\usepackage{diagbox}
\usepackage{xcolor}
\topmargin0cm
\oddsidemargin0cm
\evensidemargin0cm
\textwidth15cm

\usepackage{subfigure}
\usepackage{epstopdf}
 \usepackage{float}

\newtheorem{proposition}{Proposition}
\newtheorem{theorem}[proposition]{Theorem}
\newtheorem{corollary}[proposition]{Corollary}

\newtheorem{definition}[proposition]{Definition}

\newcommand{\Eb}{\mathbb{E}}
\newcommand{\Fc}{\mathcal{F}}
\newcommand{\Ff}{\mathfrak{F}}
\newcommand{\Jc}{\mathcal{J}}
\newcommand{\Lc}{\mathcal{L}}
\newcommand{\Lp}{\Lc_p}
\newcommand{\Rb}{\mathbb{R}}
\newcommand{\Ds}{\text{\sf D}}
\newcommand{\R}{\mathbb{R}}

\title{Bias Reduction in Sample-Based Optimization
}

\author{Darinka Dentcheva$^\star$ and Yang Lin\thanks{Department of Mathematical Sciences, Stevens Institute of Technology, Hoboken, New Jersey, USA
 {darinka.dentcheva@stevens.edu},\; {ylin17@stevens.edu}
 }
}
\date{March 12, 2021}

\begin{document}

\maketitle

\textbf{Abstract }
We consider stochastic optimization problems which use observed data to estimate essential characteristics of the random quantities involved. Sample average approximation (SAA) or empirical (plug-in) estimation are very popular ways to use data in optimization. It is well known that sample average optimization suffers from downward bias. We propose to use smooth estimators rather than empirical ones in optimization problems. We establish consistency results for the optimal value and the set of optimal solutions of the new problem formulation. The performance of the proposed approach is compared to SAA theoretically and numerically. We analyze the bias of the new problems and identify sufficient conditions for ensuring less biased estimation of the optimal value of the true problem. At the same time, the error of the new estimator remains controlled. We show that those conditions are satisfied for many popular statistical problems such as regression models, classification problems, and optimization problems with Average (Conditional) Value-at-Risk. We have observed that smoothing the least-squares objective in a regression problem by a normal kernel leads to a ridge regression. Our numerical experience shows that the new estimators frequently exhibit also smaller variance and smaller mean-square error than those of SAA.

\textbf{Keywords }
  Kernel estimators, stochastic programming, sample average approximation, consistency, smoothing, regularization.

\section{Introduction}

Many practical situations require optimization under uncertainty. Uncertainty quantification, machine learning problems, and many problems in business and engineering lead to stochastic optimization problems. 
Plenitude of literature discusses optimization of the following general structure:
\begin{equation} 
\label{p:basic}
    \min_{u \in \mathcal{U}} \Eb \big[F(u,X)\big].
\end{equation}
In \eqref{p:basic}, $U$ is a nonempty closed subset of $\mathbb{R}^n,$ representing the feasible decisions. The random vector $X$ represents the uncertain data.
The set $\Lp(\varOmega,\Fc,P;\Rb^m)$ designates the space of integrable random vectors, defined on the probability space $(\varOmega,\Fc,P)$ with realizations in $\Rb^m$; we assume that $X\in\Lp(\varOmega,\Fc,P;\Rb^m).$
The objective function $F:\Rb^n\times\Rb^m\to\Rb$ is assumed to be sufficiently regular for the expectation to be well defined and finite valued for all $u\in U$. 

We denote the optimal value of problem \eqref{p:basic} by $\theta$ and the set of its optimal solutions by $S$.

Suppose a sample $X_1, X_2,\dots, X_N$ of $N$ realizations of the
random vector $X$ is available.
The most popular approach to problem \eqref{p:basic} is the sample average approximation, usually abbreviated as SAA, which suggests to solve  the empirical counterpart of \eqref{p:basic}:
\begin{equation}
\label{SAAestimator}
\theta_{SAA}^{(N)}=\min_{u\in U} \frac{1}{N}\sum_{i=1}^N F(u,X_{i}).
\end{equation}
It is well-known that the optimal value of the SAA problem, viewed as an estimator of $\theta $, suffers from a downward bias. The reason is the following observation
\begin{multline}
\label{simpleineq}
\quad\min_{u\in U} \mathbb{E} [F(u,X)]=\min_{u\in U} \mathbb{E} \Big[\frac{1}{N}\sum_{i=1}^N F(u, X_i)\Big] \geq \mathbb{E} \Big[\min_{u\in U} \frac{1}{N}\sum_{i=1}^N F(u, X_i)\Big]=\mathbb{E} [{\theta}_{SAA}].
\end{multline}
In other words, when solving the ideal problem \eqref{p:basic} one tries to hedge against all possible realizations of $X$ but given the sample, one actually hedges against a subset of the support of $X$ in (\ref{SAAestimator}) and, hence, we end up with a more optimistic value of the unknown value $\theta.$
The effect of this downwards bias has been discussed in the literature (\cite{norkin1998branch},\cite{mak1999monte}),\cite{bayraksan2006assessing,bayraksan2011sequential}, \cite{mainbook}).
It has also been shown that the bias decreases monotonically with the sample size and diminishes asymptotically. Again, the intuition behind this phenomenon is that the subset of the support of $X$ increases with sample size. Hence, the quality of the bound obtained from (\ref{SAAestimator}) is improved.   Some methods for bias reduction have also been proposed (see, e.g.\cite{mak1999monte}). However, those methods require additional sampling and solving many optimization problems, which is not always a practicable alternative.

While asymptotics of statistical estimators is investigated in numerous publications, we point to few references that have a closer relation to stochastic optimization and to the work in this paper. In statistics, properties of smoothed estimators are investigated in \cite{tucl,E-consistency,wied2012consistency,Silverman1978weak,einmahl2005uniform,gine2004weighted} and many other papers.
First studies in the context of stochastic optimization are contained in \cite{kall1974approximations,kankova1974optimum,kavnkova1978approximative}. New approach to asymptotic behavior of estimators using epi-convergence was presented in
\cite{dupacova1988asymptotic} and further developed in \cite{king:90,robi:96}. Normalized convergence is studied in \cite{norkin1992convergence}.  An extension of the SAA method to stochastic variational inequalities is discussed in \cite{robi:99}. Other studies analyzing SAA properties are presented in \cite{kleywegt2002sample,bayraksan2006assessing,lachout2005strong}. The recent study \cite{Pang2019} focuses on non-convex problems of composite nature which arise in machine learning.  
The SAA approach is naturally applied to solve two-stage and multi-stage stochastic optimization problems (cf.\cite{mainbook}). The asymptotic behavior of two-stage problems is investigated in \cite{pflug1998glivenko,DenRom,shapir:00}, while multi-stage problems are analyzed in \cite{pflug2016empirical,heitsch2006stability}. Second-order asymptotics is discussed in \cite{DenRom,shapir:00}. 
Recent composite optimization models involve  functionals of the following form in their objective:
\[
\varrho(u, X)=  \mathbb{E}\left[f_1 \left(u,\mathbb{E}[f_2(u,\mathbb{E}[\ldots f_k(u,\mathbb{E}[f_{k+1}(u,X)],X \right) \right]
    \ldots,X)],X)].
 \]
The empirical version of a problem with such a composite objective is analyzed in \cite{dentcheva2017statistical}, where central limit theorems have been established.
Another study addressing compositions of similar type is presented in \cite{ermoliev2013sample}, see also \cite{guigues2012sampling} for related work.
We refer to \cite{romisch2003stability} for a comprehensive review on asymptotic behavior of stochastic optimization problems and to  \cite[Chapter 5]{mainbook} for a detailed analysis of SAA models and related methods. 

Our numerical experimentation shows that in many cases of interest, the downward bias mentioned above, diminishes slowly and could still be significant at large sample sizes. Hence, it is of practical interest to reduce this bias.
Although bias reduction techniques are developed in the context of statistical estimation, they may not be practicable in the situations where the sample size $N$ as well as the dimension $n$ of the decision vector $u$ are large and when the minimization itself is computationally demanding.
Hence methods such as jackknife, where $N+1$ minimization problems have to be solved,  may be numerically prohibitive.
Therefore, a direct proposal of an alternative estimator to ${\theta}_{SAA}$  that exhibits a smaller bias, is of great interest.

The main goal of this paper is to propose new ways to use sampled data for solving an optimization problem of form \eqref{p:basic} based on smooth estimators such as kernels. The use of smoothing via kernels is known in stochastic optimization, see, e.g., \cite{ermoliev1995minimization,duchi2012randomized} and the references therein. We also refer to \cite{growe1992stochastic}, where kernel estimators are used in the context of two-stage stochastic problems. We point out that the smoothing in stochastic optimization is accomplished by applying a kernel to the expectation function with respect to the decision vector. Our proposal is different: we apply smoothing to the random objective function with respect to the data.
The advantage of the new estimators would be less biased estimation of the optimal value of the true problem. We identify sufficient conditions for the bias of the optimization problems using smooth estimators to be smaller than the bias exhibited by the SAA. 
We compare the performance of the alternative approaches to SAA theoretically and numerically to demonstrate the advantage of the proposal. Our numerical experience shows that the new estimators frequently exhibit also smaller variance than those of SAA. \\

Our paper is organized as follows. In section \ref{s:smooth_estimators}, we introduce and analyze the smooth estimators of kernel type used in a sample-based optimization problem. We show the consistency of the optimal value and the optimal solution of the new problem formulation in section \ref{s:consistency}. In section \ref{s:bias}, we obtain an upper bound of the bias in problems with smooth estimators and establish relations between the bias of those problems and the bias of their SAA counterpart \eqref{SAAestimator}.
We show the relevancy of our results in regression models and other optimization problems in section \ref{s:applications}.
Numerical experience is reported in section \ref{s:numerical_results}. 



\section{Smooth estimators}
\label{s:smooth_estimators}

Suppose a sample $X_1, X_2,\dots, X_N$ of $N$ realizations of the
random vector $X$ is available.
The estimator ${\theta}_{SAA}$ can be considered informally as a  kernel estimator of the type
\[
\min_{u\in U}\frac{1}{N} \sum_{i=1}^N \int_{\Rb^m} F(u, x )\delta(x-X_{i}) dx
\]
with $\delta (\cdot)$ being the delta function. Therefore,  one could try to replace, for each $i=1,2,\dots, N$  the delta function by a kernel function $\frac{1}{h^m}K(\frac{x-X_i}{h})$
for a suitably chosen bandwidth $h=h_N.$
This idea leads to the following kernel-based estimator:
\begin{equation}
\label{p:kernel}
\theta_K^{(N)} =
    \min_{u \in U} \frac{1}{N} \sum_{i=1}^N \int_{\Rb^m} F(u,x)K\Big(\frac{x-X_i}{h_N}\Big)\frac{1}{h_N^m}dx.
\end{equation}

We note that \emph{we do not need assume the existence of a density for the distribution of the random vector} $X$ in order to use a smooth estimator for the expected value $\Eb\big[F(u, X)\big]$ in the optimization problem. However, if the distribution of $X$ has a density, then we could employ known techniques for density estimation as a guide for choosing the necessary parameters such as the bandwidth in the kernel estimation. 

We start our analysis in a general setting, following \cite{tucl}. The empirical measure $P_N$ is defined by $P_N= \sum_{i=1}^N \frac{1}{N}\delta({X_i})$.  We consider smoothing $P_N$ by a convolution with a measure $\mu_N$, defined as follows:
\begin{equation}
\label{smoothE}
[P_N *\mu_N] F(u, X) = \int_{\Rb^m} F(u,X)\, d [P_N *\mu_N] = \frac{1}{N}\sum_{i=1}^N \int_{\Rb^m} F(u,X_i+z) \, d\mu_N(z).
\end{equation}
It is assumed that, for increasing sample size $N$, we choose a sequence of measures $\{\mu_N\}_{N=1}^\infty$, which are independent of the respective measure $P_N$. The sequence is called a \emph{proper approximate convolution identity} (cf.\cite{tucl}) if the following two conditions are satisfied: 
\begin{itemize}
\item $\mu_N$ converges weakly to point mass $\delta(0)$ when $N\to\infty$; 
\item for every $a>0$,
$\lim_{N\to\infty} |\mu_N| (\Rb^m \setminus [-a,a]^m) = 0$, with $|\mu_N|$ denoting the total variation of $\mu_N.$
\end{itemize}
Throughout the entire paper, we assume that all measures $\mu_N$ are normalized, i.e., $\mu_N(\Rb^m)=1.$
Special case of \eqref{smoothE} is the kernel estimator of form:
\[
[P_N *\mu_N] F(u, X) = \frac{1}{Nh_N^m}\sum_{i=1}^N \int_{\Rb^m} F(u,x) K\Big(\frac{x-X_i}{h_N}\Big)\, dx,
\]
where $K$ is a $m$-dimensional density function with respect to the Lebesgue measure and $h_N>0$ is a smoothing parameter such that $\lim_{N\to\infty} h_N=0$. We have 
\[
d\mu_N(x) = \frac{1}{h_N^m} K\Big(\frac{x}{h_N}\Big)\,dx.
\] 
The sequence defined this way is a proper approximate convolution identity. Indeed, for every bounded continuous function $g(x)$, we have
\begin{multline*}
\lim_{N\to\infty} \frac{1}{h_N^m} \int_{\Rb^m} g(x) K\Big(\frac{x}{h_N}\Big)\, dx = 
\lim_{N\to\infty} \int_{\Rb^m} g(h_Ny) K(y)\, dy\\ = \int_{\Rb^m} \lim_{N\to\infty} g(h_Ny) K(y)\, dy  = g(0),
\end{multline*}
where the last equation follows by virtue of the Lebesgue dominated convergence theorem. Additionally,
\[
\lim_{N\to\infty} \int_{\Rb^m\setminus [-a,a]^m} \frac{1}{h_N^m} K\Big(\frac{x}{h_N}\Big)\, dx = \lim_{N\to\infty} \int_{\Rb^m\setminus [-a/h_N,a/h_N]^m} K(y)\, dy =  0
.\]
Notice that the estimators $\mu_N$ may take more general form such as multivariate kernels where the bandwidth for $m>1$ is given by a matrix or kernels with variable bandwidth (\cite{tsybakov2008introduction,gine2016mathematical}). 

More general version of problem \eqref{p:kernel}, using measures $\mu_N$ from a proper approximate convolution identity, is the following formulation.
\begin{equation}
\label{p:smooth}
\theta_\mu^{(N)} =
    \min_{u \in U} \frac{1}{N} \sum_{i=1}^N \int_{\Rb^m} F(u,X_i+z)\;d\mu_N(z).
\end{equation}
We shall analyze the consistency of $\theta_\mu^{(N)}$  and the asymptotic behavior of the corresponding optimal solutions.


\subsection{Consistency of the optimal value and the optimal solutions for smooth estimators}
 \label{s:consistency}

Consider the collection of functions 
$\mathfrak{F}=\{ f_{u}:\Rb^m\to\Rb:\, f_u (x) = F(u,x),\; u \in U \}$.

We recall that $g:\Rb^m\to\Rb$ is called an envelope function for the class of functions $\mathfrak{F}$, if $|f_u(x)| \leq g(x) < \infty$ for all $x\in\Rb^m$ for any $f_u\in\mathfrak{F}$.
We shall pose an assumption about the modulus of continuity for the functions $F(u,\cdot)$ and, in order to define more precisely our assumptions, we work with the following notion.
\begin{definition}
Let a proper approximate convolution identity $\{\mu_N\}_{N=1}^\infty$be given. 
A function $g:\Rb^m\to\Rb$ admits a $\mu_N$-adapted non-decreasing modulus of continuity $w_g:\Rb_{+}\to\Rb$ if 
\begin{itemize}
	\item $\lim_{t\downarrow 0} w_g (t) = 0$ and $w_g(\cdot)$ is non-decreasing;
	\item for all $x,z\in\R^m$, it holds $| g(x+z) - g(x) | \leq w_g(\|z\|)$;
	\item $\lim_{N\to\infty} \int_{\Rb^m} w_g(\|z\|)\,d\mu_N(z) =0$.
\end{itemize}
\end{definition}
Note that if $g$ admits a modulus of continuity, then it also admits  a non-decreasing modulus. Furthermore, if $g$ satisfies the H\"older condition with constants $\alpha\in (0,2]$ and $L>0$,  then for all $x,z\in\Rb^m$, we have
\[
| g(x) - g(z) | \leq L \|x-z\|^\alpha.
\]
Additionally, let $K$ be a $m$-dimensional symmetric density function with finite second moment: $\int\limits_{\Rb^m}\|y\|^2 K(y)dy <\infty.$ Setting
$d\mu_N(x) = \frac{1}{h_N^m} K\Big(\frac{x}{h_N}\Big)\,dx$ as before, we observe that $g$ admits a $\mu_N$-adapted nondecreasing modulus of continuity, defined as  
 ${w_g(t) = L t^\alpha}$  for all $t>0$. In that case, we have 
\begin{align*}
\int_{\Rb^m} w_g(\|x\|)\,d\mu_N(x) & = \int_{\Rb^m} {w_g}(\|x\|) \frac{1}{h_N^m} K\Big(\frac{x}{h_N}\Big)\,dx \\
&= \int_{\Rb^m} {w_g}(\|h_Nz\|) K(z)\,dz = Lh_N^\alpha\int_{\Rb^m} \|z\|^\alpha K(z) dz
<\infty.
\end{align*}
Additionally, since $\lim_{N\to\infty} h_N =0$, we obtain
\[
\lim_{N\to\infty} \int_{\Rb^m} w_g(\|z\|)\,d\mu_N(z) = \lim_{N\to\infty} 
Lh_N^\alpha\int_{\Rb^m} \|z\|^\alpha K(z) dz =0.
\]
In particular, if $g$ is Lipschitz continuous, then it admits $K$-integrable non-decreasing modulus of continuity for kernels of this type. 
\begin{theorem} \label{t:K-consistency1}
Assume that a proper approximate convolution identity $\{\mu_N\}_{N=1}^\infty$ is  given and the following conditions are satisfied:
\begin{itemize}
    \item[(c1)] the functions $f_u(\cdot)$ admit a $\mu_N$-adapted non-decreasing modulus of continuity independent of $u$;
    \item[(c2)] $\mathfrak{F}$ has an $P_X$-integrable envelope function $g$;
    \item[(c3)] $F(\cdot ,x)$ is a continuous function for all $x\in \Rb^n$;
    \item[(c4)] the set $U$ is compact.
\end{itemize}
Then  
\begin{equation} 
    \sup_{u \in U}\bigg|\Eb\left[F(u,X)\right]-\frac{1}{N}\sum_{i=1}^N \int_{\Rb^m} F(u,X_i +z)\,d\mu_N(z)\bigg|\xrightarrow[N\to\infty]{a.s.} 0.
\end{equation}
\end{theorem}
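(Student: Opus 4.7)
The plan is to decompose the quantity inside the supremum into a deterministic ``bias'' term (which captures the difference between the true integrand and its smoothed version) and a stochastic ``variance'' term (which is an empirical average versus its expectation). Specifically, let $G_N(u,x) = \int_{\Rb^m} F(u,x+z)\,d\mu_N(z)$. Then
\[
\Eb[F(u,X)] - \frac{1}{N}\sum_{i=1}^N G_N(u,X_i) = \underbrace{\Eb[F(u,X)] - \frac{1}{N}\sum_{i=1}^N F(u,X_i)}_{(\mathrm{I})} \; + \; \underbrace{\frac{1}{N}\sum_{i=1}^N \big[F(u,X_i) - G_N(u,X_i)\big]}_{(\mathrm{II})}.
\]

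The first step is to handle term $(\mathrm{II})$ using condition (c1). Since $\mu_N(\Rb^m)=1$, I rewrite $F(u,X_i)-G_N(u,X_i) = \int [F(u,X_i) - F(u,X_i+z)]\,d\mu_N(z)$, whose absolute value is bounded by $\int w(\|z\|)\,d\mu_N(z)$, with $w$ the modulus of continuity from (c1) (common to all $u$). Hence
\[
\sup_{u\in U}\bigg|\frac{1}{N}\sum_{i=1}^N \big[F(u,X_i)-G_N(u,X_i)\big]\bigg| \; \leq \; \int_{\Rb^m} w(\|z\|)\,d\mu_N(z) \; \longrightarrow \; 0
\]
deterministically by the $\mu_N$-adapted property. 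This bound is uniform in $u$ and does not depend on the realization of the sample, so term $(\mathrm{II})$ vanishes without appeal to probability.

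The second step is to handle term $(\mathrm{I})$ by invoking a uniform strong law of large numbers for the family $\mathfrak{F} = \{F(u,\cdot):u\in U\}$. Here I would cite the classical Glivenko--Cantelli/ULLN result (see e.g.\ \cite[Theorem 7.53]{mainbook}): if $U$ is compact (c4), $F(\cdot,x)$ is continuous for every $x$ (c3), and $\mathfrak{F}$ has a $P_X$-integrable envelope $g$ (c2), then
\[
\sup_{u\in U}\bigg|\frac{1}{N}\sum_{i=1}^N F(u,X_i) - \Eb[F(u,X)]\bigg| \xrightarrow[N\to\infty]{a.s.} 0.
\]
Combining this with the deterministic bound for $(\mathrm{II})$ via the triangle inequality yields the claimed uniform almost-sure convergence.

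The main obstacle is conceptual rather than computational: one must recognize that the $\mu_N$-adapted modulus of continuity is precisely designed so that the smoothing operation contributes only a uniformly small perturbation to each sample value, reducing the whole question to a standard uniform SLLN for the unsmoothed class $\mathfrak{F}$. A minor technical point is that if $\mu_N$ is a signed measure (rather than a positive kernel density), one should replace $d\mu_N$ by $d|\mu_N|$ in the bound of term $(\mathrm{II})$; since all concrete examples in the paper use positive kernels this does not affect the argument. I would also need to verify that the envelope assumption (c2) together with the modulus bound ensures $G_N(u,X_i)$ is integrable uniformly in $N$, but this follows immediately from $|G_N(u,x)| \leq g(x) + w(\|z\|)$-type estimates obtained from (c1) and (c2).
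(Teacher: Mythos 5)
Your proposal is correct and follows essentially the same route as the paper's own proof: the identical two-term decomposition (empirical deviation plus smoothing perturbation), the Glivenko--Cantelli/uniform SLLN from (c2)--(c4) for the first term, and the bound $\int w(\|z\|)\,d\mu_N(z)\to 0$ from (c1) for the second. Your added remarks on signed measures and integrability of the smoothed integrand are sound refinements but do not change the argument.
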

\begin{proof}
We start from the following estimate:
    \begin{multline} \label{kernel-inequality}
        \sup_{u \in U} \left| \Eb\left[F(u,X)\right]-\frac{1}{N}\sum_{i=1}^N \int_{\Rb^m} F(u,X_i +z)\,d\mu_N(z) \right|\\
        \leq  \sup_{u \in U} \left| \Eb\left[F(u,X)\right]  -\frac{1}{N}\sum_{i=1}^{N}F(u,X_i) \right| + \qquad\qquad\\
        \sup_{u \in U} \left| \frac{1}{N}\sum_{i=1}^{N}F(u,X_i)-\frac{1}{N}\sum_{i=1}^N \int_{\Rb^m} F(u,X_i +z)\,d\mu_N(z) \right|.
    \end{multline}
Due to assumptions (c2)--(c4), the class $\Ff$ is a Glivenko-Cantelli class.
Therefore, the first term on the right hand side converges to zero when $N\to\infty$ (cf. \cite[7.48]{mainbook}). For the second term, 
denoted $\Delta_{N},$ we have the following inequality:
\begin{align*}
    \Delta_{N}  =  &\sup_{u \in U} \left| \frac{1}{N}\sum_{i=1}^{N}F(u,X_i)-\frac{1}{N}\sum_{i=1}^N \int_{\Rb^m} F(u,X_i +z)\,d\mu_N(z) \right| \\
        &\leq \frac{1}{N}\sum_{i=1}^{N}\sup_{u \in U}\left| F(u,X_i)-\int_{\Rb^m} F(u,X_i +z)\,d\mu_N(z) \right|\\
        & \leq  \frac{1}{N}\sum_{i=1}^{N}\sup_{u \in U} \int_{\Rb^m} \big| F(u,X_i)-F(u,X_i+z)\big|\, d\mu_N(z) \\
        & \leq \frac{1}{N}\sum_{i=1}^{N} \int_{\Rb^m} w(\|z\|)\, d\mu_N(z) .
\end{align*}
Hence, we obtain
\[
0\leq \Delta_N  \leq \int_{\Rb^m} w(\|z\|)\, d\mu_N(z).
\]
The function $w(\|\cdot\|)$ is $\mu_N$-adapted and, thus, we infer that
\[
\lim_{N\to\infty} \Delta_{N}
\leq \int_{\Rb^m} \lim_{N\to \infty} w(\|z\|) \, d\mu_N(z) = 0.
\]
We conclude that,
\[
\Delta_{N} = \sup_{u \in U}\left| \Eb\left[F(u,X)\right]-\frac{1}{N}\sum_{i=1}^N \int_{\Rb^m}F(u,X_i +z)\,d\mu_N(z)\right| \xrightarrow[N\to\infty]{a.s.} 0
,\]
which shows the claimed convergence.
\end{proof}

Consider a kernel function $K$ satisfying the following assumption.
\begin{itemize}
\item[(k1)] $K$ is a symmetric around zero density function with respect to the Lebesgue measure, i.e., 
\[
\int\limits_{\Rb} y_l K(y)dy_l=0\;\;\text{ for }\; l=1,\cdots,m. 
\]
\item[(k2)]  The second order moment is a finite positive number:
\[
m_2(K) = \int\limits_{\Rb^m}\|y\|^2 K(y)dy.
\]
\end{itemize}
Note that the quantity $\bar{m}_\alpha(K) = \int\limits_{\Rb^m}\|y\|^\alpha K(y)dy$ is finite for all $\alpha\in [0,2]$ under assumption (k2).

From now till the end of this paper, we shall omit the limits of the integrals in the presentation. Unless otherwise noted all integrals will be taken over $\Rb^m.$
\begin{corollary}
\label{c:consistency1}
Assume that a kernel function $K$ satisfying (k1)--(k2) is given, $\lim_{N\to\infty} h_N=0$, and assumptions (c2)--(c4) are satisfied. Additionally, suppose  
\begin{itemize}
    \item[(c1k)] the functions $f_u(\cdot)$ admit a $K$-integrable non-decreasing modulus of continuity $w(\cdot)$ independent of $u$.
\end{itemize}
Then 
\begin{equation} 
   \sup_{u \in U}\bigg|\Eb\left[F(u,X)\right]-\frac{1}{N}\sum_{i=1}^N \int F(u,z)K\Big(\frac{z-X_i}{h_N}\Big)\frac{1}{h_N}dz\bigg| \xrightarrow[N\to\infty]{a.s.} 0.
\end{equation}
\end{corollary}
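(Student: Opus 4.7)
The plan is to derive this corollary as a direct specialization of Theorem~\ref{t:K-consistency1} to the sequence of kernel-induced measures $d\mu_N(x) = h_N^{-m} K(x/h_N)\, dx$. To invoke the theorem I would need to (i) confirm that $\{\mu_N\}_{N=1}^\infty$ is a proper approximate convolution identity and (ii) upgrade assumption (c1k) to assumption (c1), namely that the common modulus of continuity $w$ is $\mu_N$-adapted for this particular sequence. Assumptions (c2)--(c4) transfer verbatim.

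For (i), I would reuse the calculation carried out in the paragraph preceding Theorem~\ref{t:K-consistency1}: the change of variables $y = x/h_N$ together with Lebesgue's dominated convergence theorem shows that $\mu_N$ converges weakly to $\delta(0)$, and the same substitution, combined with $h_N \to 0$, gives $|\mu_N|(\Rb^m\setminus [-a,a]^m)\to 0$ for every $a>0$. Normalization $\mu_N(\Rb^m)=1$ follows because $K$ is a probability density.

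For (ii), I would apply the substitution $z = h_N y$ to rewrite
\[
\int w(\|z\|)\, d\mu_N(z) = \int w(h_N\|y\|)\, K(y)\, dy.
\]
Since $w$ is non-decreasing with $\lim_{t\downarrow 0} w(t)=0$, the integrand tends to $0$ pointwise as $N\to\infty$. For all sufficiently large $N$ one has $h_N\le 1$, hence by monotonicity $w(h_N\|y\|)\le w(\|y\|)$, and the dominating function $w(\|y\|)K(y)$ is integrable by assumption (c1k). Lebesgue's dominated convergence theorem then delivers $\int w(\|z\|)\, d\mu_N(z)\to 0$, which is the third bullet in the definition of a $\mu_N$-adapted modulus; the remaining two bullets are inherited directly from (c1k).

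With (i) and (ii) established, Theorem~\ref{t:K-consistency1} applies and yields the claimed uniform almost sure convergence. The only step of genuine content is the dominated convergence argument in (ii), and it presents no real obstacle: the monotonicity of $w$ furnishes the required envelope for free once $K$-integrability is postulated.
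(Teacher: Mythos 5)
Your proposal is correct and in substance identical to the paper's own proof: the paper handles the corollary via exactly the same computation --- the change of variables $z = h_N y$, the monotonicity bound $w(h_N\|y\|)\le w(\|y\|)$ for $h_N\le 1$, and dominated convergence using the $K$-integrable envelope $w(\|y\|)K(y)$ --- the only difference being that it re-runs this estimate on the term $\Delta_N$ inside the proof of Theorem~\ref{t:K-consistency1} rather than, as you do, formally verifying that $(c1k)$ implies condition $(c1)$ (and recalling that the kernel-induced measures form a proper approximate convolution identity, already established in Section~\ref{s:smooth_estimators}) so as to invoke the theorem as a black box. Your packaging is slightly more modular, but the key analytic step is the same, so this counts as essentially the same approach.
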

\begin{proof}
Notice that the assumption about (c1k) is more explicit and easier to verify than (c1). It implies the existence of a nondecreasing function $w$ such that
$\lim_{t\downarrow 0} w (t) = 0$, the integral $\int w(\|z\|) K(z) dz$ is finite, and for all $x,z\in\Rb^m$, it holds $| f_u(x+z) - f_u(x) | \leq w(\|z\|)$. 

The proof of Theorem~\ref{t:K-consistency1} needs to be modified only for the upper estimate on $\Delta_{N}$. We obtain in the same way as previously 
\begin{align*}
    \Delta_{N} \leq \frac{1}{N}\sum_{i=1}^{N} \int w(h_N\|z\|) K(z)dz.
\end{align*}
Without loss of generality, we assume $h_N\leq 1$. Hence, we use the monotonicity of $w(\cdot)$ to infer the inequalities
\[
0\leq \Delta_N  \leq \int w(\|z\|) K(z)dz.
\]
The function $w(\|\cdot\|)$ is $K$-integrable and, thus, we can apply Lebesgue dominated convergence theorem and obtain that
\[
\lim_{h_N\downarrow 0} \Delta_{N}
\leq \int \lim_{h_N\downarrow 0} w(h_N\|z\|) K(z)dz = 0.
\]
The last equations follows by the definition of modulus of continuity.
All other arguments remain the same.    
\end{proof}

We denote the solutions set of problem \eqref{p:smooth} by $S_\mu^{(N)}$, while
the solution set of \eqref{p:kernel} is denoted by $S_K^{(N)}.$
For two sets, $A, B\subset\Rb^n$, we consider the deviation of $A$ from $B$, defined as follows: 
\[
\Ds(A,B) = \sup_{x\in A} d(x,B) = \sup_{x\in A} \;\inf_{y\in B} \|x-y\|.
\]
The next statement can be inferred from (\cite[Theorem 2.4]{mainbook}) using Theorem~\ref{t:K-consistency1} and Corollary~\ref{c:consistency1}. We provide more detailed proof due to the key role of the statement and for convenience of the Reader.
\begin{theorem} 
\label{t:K-consistency2} {~~}
\begin{itemize}
    \item[(i1)]
Under the assumptions of Theorem \ref{t:K-consistency1}, the estimator $\theta_\mu^{(N)}$ of $\theta$ is strongly consistent, i.e.,
$\theta_\mu^{(N)} \xrightarrow[N\to\infty]{a.s.} \theta .$
Furthermore, $\Ds(S_\mu^{(N)}, S)\xrightarrow[N\to\infty]{a.s.} 0.$
\item[(i2)]
Under the assumptions of Corollary~\ref{c:consistency1},  $\theta_K^{(N)}$ is a strongly consistent estimator of $\theta$, i.e.,
$\theta_K^{(N)} \xrightarrow[N\to\infty]{a.s.} \theta$. Additionally,  
$\Ds(S_K^{(N)}, S)\xrightarrow[N\to\infty]{a.s.} 0.$
\end{itemize}
\end{theorem}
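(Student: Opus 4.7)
The plan is to derive both consistency assertions from the uniform convergence result in Theorem~\ref{t:K-consistency1} (respectively Corollary~\ref{c:consistency1}) by a standard argmin/argmax stability argument, since (i2) follows from (i1) once we specialize $\mu_N$ to the kernel form. I would therefore concentrate the proof on (i1) and then state that (i2) is obtained by the identical reasoning applied to the setting of Corollary~\ref{c:consistency1}.

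First, I would set up notation: write $F^{*}(u)=\Eb[F(u,X)]$ and $\hat{F}_N(u)=\frac{1}{N}\sum_{i=1}^N\int F(u,X_i+z)\,d\mu_N(z)$, so that $\theta=\min_{u\in U}F^{*}(u)$, $S=\operatorname{Argmin}_{u\in U}F^{*}(u)$, $\theta_\mu^{(N)}=\min_{u\in U}\hat{F}_N(u)$, and $S_\mu^{(N)}=\operatorname{Argmin}_{u\in U}\hat{F}_N(u)$. Using assumptions (c2)--(c4), together with the Lebesgue dominated convergence theorem applied with the envelope $g$, I would note that $F^{*}$ is continuous on $U$, and similarly that $\hat{F}_N$ is continuous (the envelope $g$ controls $F(u,X_i+z)$ uniformly in $u$ for fixed $X_i$ and $z$, via the modulus of continuity in (c1) plus $g$). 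Compactness of $U$ then guarantees that both minima are attained and that $S$ and $S_\mu^{(N)}$ are nonempty compact subsets of $U$.

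For the optimal value, I would use the standard two-sided inequality
\begin{equation*}
\bigl|\theta_\mu^{(N)}-\theta\bigr|=\Bigl|\min_{u\in U}\hat{F}_N(u)-\min_{u\in U}F^{*}(u)\Bigr|\leq\sup_{u\in U}\bigl|\hat{F}_N(u)-F^{*}(u)\bigr|,
\end{equation*}
and apply Theorem~\ref{t:K-consistency1} directly to conclude $\theta_\mu^{(N)}\xrightarrow[N\to\infty]{a.s.}\theta$.

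For the set convergence $\Ds(S_\mu^{(N)},S)\to 0$ a.s., I would argue by contradiction along almost every sample path on which the uniform convergence holds. Suppose there exists $\varepsilon>0$ and a subsequence $N_k\to\infty$ with $u_{N_k}\in S_\mu^{(N_k)}$ such that $d(u_{N_k},S)\geq\varepsilon$. By compactness of $U$, extract a further subsequence (not relabeled) with $u_{N_k}\to u^{*}\in U$. Using the already proven convergence $\theta_\mu^{(N_k)}\to\theta$, the uniform convergence of $\hat{F}_{N_k}$ to $F^{*}$, and the continuity of $F^{*}$, I obtain $F^{*}(u^{*})=\lim_k\hat{F}_{N_k}(u_{N_k})=\lim_k\theta_\mu^{(N_k)}=\theta$, so $u^{*}\in S$. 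This contradicts $d(u_{N_k},S)\geq\varepsilon$, completing the argument. I expect the mild technical point to be the justification that $F^{*}$ and $\hat{F}_N$ are continuous on $U$ (needed to pass to the limit in the argmin argument); this is where the envelope in (c2) combined with the modulus of continuity in (c1) is essential. Part (i2) then follows verbatim by invoking Corollary~\ref{c:consistency1} in place of Theorem~\ref{t:K-consistency1}.
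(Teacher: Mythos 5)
Your proposal is correct and takes essentially the same route as the paper: value consistency via the two-sided bound $\bigl|\min_{u\in U}\hat{F}_N(u)-\min_{u\in U}F^{*}(u)\bigr|\leq\sup_{u\in U}\bigl|\hat{F}_N(u)-F^{*}(u)\bigr|$ combined with Theorem~\ref{t:K-consistency1} (resp.\ Corollary~\ref{c:consistency1}), and solution-set convergence via compactness of $U$ plus a subsequence/contradiction argument (the paper proves (i2) and notes (i1) is analogous, while you do the reverse, which is immaterial). If anything, your pathwise justification that $\hat{F}_{N_k}(u_{N_k})\to F^{*}(u^{*})$, using uniform convergence together with continuity of $F^{*}$, is slightly more careful than the paper's corresponding display.
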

\begin{proof}
Under the continuity assumptions for the function $F$ and the compactness of $U$, problems \eqref{p:basic}, \eqref{p:kernel}, and \eqref{p:smooth} are solvable and their solution sets are non-empty subsets of $U$.

We shall show statement (i2) for the kernel-based estimator; the counterpart statement (i1) for the smooth estimators follow in the same way. 
Let $\hat{u}\in S$ and $u_K^{(N)}\in S_K^{(N)}.$  
Using Corollary~\ref{c:consistency1}, we obtain the following a.s. estimates
\begin{align*}
\theta_K^{(N)} -\theta & \leq \frac{1}{N}\sum_{i=1}^N\int F(\hat{u},X_i+h_Nz)K(z)dz - \Eb[F(\hat{u}, X)]\\
& \leq \sup_{u\in U}\Big| \frac{1}{N}\sum_{i=1}^N\int F({u},X_i+h_Nz)K(z)dz - \Eb[F({u}, X)]\Big|.
\end{align*}
Using the same argument,
\begin{align*}
\theta -\theta_K^{(N)} & \leq \Eb[F(u_K^{(N)}, X)] - \frac{1}{N}\sum_{i=1}^N \int F(u_K^{(N)},X_i+h_Nz)K(z)dz \\
& \leq \sup_{u\in U}\Big| \frac{1}{N}\sum_{i=1}^N\int F({u},X_i+h_Nz)K(z)dz - \Eb[F({u}, X)]\Big|\\
\end{align*}
Therefore, $|\theta_K^{(N)} -\theta| \leq \sup_{u\in U}\Big| \frac{1}{N}\sum_{i=1}^N\int F({u},X_i+h_Nz)K(z)dz - \Eb[F({u}, X)]\Big|$ a.s. and the first claim follows from Corollary~\ref{c:consistency1}. 

Consider the distance $\Ds(S_K^{(N)}, S)$  and assume that it does not converge to zero as claimed. This means that a sequence $u_K^{(N_j)}\in S_K^{(N_j)}$, $j=1,2,\dots$, exists such that for some $\varepsilon >0$, it holds $P\big(d(u_K^{(N_j)}, S)\geq \varepsilon)\geq p>0$ for all $N_j$ in that sequence. The points $u_K^{(N_j)}\in U$, thus, the sequence $\{u_K^{(N_j)}\}$
has a convergent subsequence, denoted $\Jc$, with accumulation point $u^*\in U$. Due to the continuity of the distance function,  we obtain $P(d(u^*, S)\geq \varepsilon > 0)\geq p.$
On the other hand, the following relations hold: 
\begin{multline*}
\theta =\!\lim_{j\in\Jc,\; h_N\downarrow 0} \theta_K^{(N_j)} 
\geq  \lim_{j\in\Jc,\; h_N\downarrow 0} \frac{1}{N_j}\sum_{i=1}^{N_j}\int F(u^*\!, X_i+h_{N_j}z ) K(z)dz = \Eb [F(u^*, X) ],
\end{multline*}
implying $P(d(u^*\!, S)=0)=1,$ which is a contradiction. 
\end{proof}

\section{Bias relations for the kernel smoothing}
\label{s:bias}

We shall relate the bias of the estimator $\theta_K^{(N)}$ to the bias of the SAA.
We assume the latter bias to be strictly negative. 
Let us fix an optimal solution $\hat{u}\in S.$ We shall assume a particular form of the modulus of continuity, which fits to the applications discussed in the paper. First, we obtain an upper bound to the bias and the mean-square error of the estimate. 
\begin{theorem}
\label{t:kernel-bias-upperbound}
Assume that the kernel $K$ satisfies (k1)-(k2) and the function $F(\hat{u},\cdot)$ in problem \eqref{p:basic} admits a modulus of continuity of form $w(t)= \sum_{j=1}^\ell L_jt^{\alpha_j} $ with $\alpha_j\in(0,2]$, $\ell\geq 1$. 
Then constants $L>0$ and $\alpha\in(0,2]$ exist, so that the bias of the kernel estimator $\theta_K^{(N)}$ is bounded from above by 
$L h_N^\alpha$. 
\end{theorem}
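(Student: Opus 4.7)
The plan is to bound the bias $\Eb[\theta_K^{(N)}] - \theta$ from above by using a fixed optimizer $\hat u \in S$ of the true problem as a feasible candidate in the smoothed problem. Since $\hat u \in U$, minimization over $U$ can be upper-bounded by its value at $\hat u$, yielding the almost sure inequality
\[
\theta_K^{(N)} \leq \frac{1}{N}\sum_{i=1}^N \int F(\hat u, X_i + z)\, d\mu_N(z),
\]
with $d\mu_N(z) = h_N^{-m} K(z/h_N)\, dz$. This is the only place where optimality of $\hat u$ is used; the asymmetry is the reason only an upper bound on the bias is produced.

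Taking expectations, interchanging expectation and integrals by Fubini, and using that $X_1,\dots,X_N$ are iid copies of $X$ gives
\[
\Eb\!\left[\theta_K^{(N)}\right] \leq \int \Eb\!\left[F(\hat u, X+z)\right] d\mu_N(z).
\]
Since $\mu_N$ is a probability measure, $\theta = \int \Eb[F(\hat u, X)]\, d\mu_N(z)$, and subtracting gives
\[
\Eb\!\left[\theta_K^{(N)}\right] - \theta \;\leq\; \int \bigl(\Eb[F(\hat u, X+z)] - \Eb[F(\hat u, X)]\bigr)\, d\mu_N(z) \;\leq\; \int w(\|z\|)\, d\mu_N(z),
\]
where the last inequality moves the difference under the expectation via Jensen and invokes the prescribed modulus of continuity of $F(\hat u,\cdot)$.

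It remains to evaluate the kernel integral by the change of variables $y = z/h_N$:
\[
\int w(\|z\|)\, d\mu_N(z) \;=\; \sum_{j=1}^\ell L_j h_N^{\alpha_j} \int \|y\|^{\alpha_j} K(y)\, dy \;=\; \sum_{j=1}^\ell L_j h_N^{\alpha_j}\,\bar m_{\alpha_j}(K).
\]
Each moment $\bar m_{\alpha_j}(K)$ is finite by (k2) together with the remark stated immediately after it. Setting $\alpha := \min_{j} \alpha_j$ and restricting to $N$ large enough that $h_N \leq 1$, one has $h_N^{\alpha_j} \leq h_N^{\alpha}$ for every $j$; factoring out $h_N^{\alpha}$ yields the claim with constant $L := \sum_{j=1}^\ell L_j\, \bar m_{\alpha_j}(K)$.

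The argument is mechanical once the ``plug in $\hat u$'' device is used, so there is no deep obstacle. The main care points are justifying Fubini, which follows from the $P_X$-integrable envelope in (c2) combined with the finiteness of the kernel moments, and recognising that the estimate is genuinely one-sided: the reverse inequality would require using an optimizer of the sample-dependent smoothed problem in $\Eb[F(\cdot, X)]$, which cannot be controlled by the same clean chain. This asymmetric bound is exactly the right shape to combine with the known downward SAA bias in order to quantify the bias reduction pursued later in the paper.
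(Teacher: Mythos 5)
Your proof is correct and follows essentially the same route as the paper: plug the fixed true optimizer $\hat u$ into the smoothed objective to get the one-sided a.s.\ bound, compare termwise with $\theta = \Eb[F(\hat u, X)]$, bound the difference by the modulus of continuity, and evaluate the kernel moments after the change of variables, taking $\alpha = \min_j \alpha_j$ and $L = \sum_{j=1}^\ell L_j \bar m_{\alpha_j}(K)$ once $h_N \le 1$. (One cosmetic remark: the step you attribute to Jensen is just linearity and monotonicity of expectation, and the integrability needed for Fubini already follows from the modulus bound $|F(\hat u, X+z)| \le |F(\hat u, X)| + w(\|z\|)$ together with the finiteness of $\Eb[F(\hat u, X)]$, so invoking (c2) is unnecessary here.)
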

\begin{proof}
The following chain of inequalities holds:
\begin{align}
\Eb[\theta_K^{(N)}] - \theta  
& =
\Eb\Big[\min_{u\in U}\frac{1}{N} \sum_{i=1}^N \int  F(u,X_i+h_Nz)K(z)dz \Big]-
\min_{u\in U} \frac{1}{N} \sum_{i=1}^N \Eb[F(u,X_i)]\notag\\
& \leq 
\Eb\Big[\frac{1}{N} \sum_{i=1}^N \int F(\hat{u},x_i+h_Nz) K(z)dz \Big] - \frac{1}{N} \sum_{i=1}^N  \Eb[F(\hat{u},X_i)]\notag\\
& = 
\frac{1}{N} \sum_{i=1}^N \Eb\Big[\int \Big( F(\hat{u},X_i+h_Nz)- F(\hat{u},X_i)\Big) K(z)dz \Big]\notag\\
& \leq 
\sum_{j=1}^\ell L_jh_N^{\alpha_j} \int \|z\|^{\alpha_j} K(z)dz \label{upperbound-intermed}. 
\end{align}
Without loss of generality, we may assume that $h_N<1$. 
Therefore, we select $\alpha=\min_{1\leq j\leq \ell} \alpha_j$ and obtain
\[
\sum_{j=1}^\ell L_jh_N^{\alpha_j} \int \|z\|^{\alpha_j} K(z)dz \leq h_N^\alpha \sum_{j=1}^\ell L_j\bar{m}_{\alpha_j}(K).
\] 
All terms $L_j\bar{m}_{\alpha_j}(K)$ are finite under the assumptions (k1)-(k2). We define $L= \sum_{j=1}^\ell L_j\bar{m}_{\alpha_j}(K)$
and using relations \eqref{upperbound-intermed}, we obtain
\begin{equation}
\label{bound-lip}
 \Eb[\theta_K^{(N)}] - \theta \leq Lh_N^\alpha   
\end{equation} 
as stated. 
\end{proof}
We notice that the constant $L$ is independent of the sample size; it depends on the structure of the function $F(\hat{u},\cdot)$. The only way the upper bound of the bias depends on the sample is by the choice of the bandwidth $h_N$. 
The bound \eqref{bound-lip} has the following implication.
\begin{corollary}
Under the conditions of Theorem~\ref{t:kernel-bias-upperbound}, a number $h^*>0$ exists, such that whenever $h_N\in (0,h^*)$ the following relation holds 
\begin{equation}
\Eb[\theta_{K}^{(N)}] -\theta  \leq |\Eb[\theta_{SAA}^{(N)}] -\theta |.
\label{bias-interval-upper}
\end{equation}
\end{corollary}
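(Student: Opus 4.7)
The plan is a direct combination of the upper bound established in Theorem~\ref{t:kernel-bias-upperbound} with the standing assumption of the section that the SAA bias is strictly negative. Since $\Eb[\theta_{SAA}^{(N)}] - \theta < 0$, we can abbreviate $b_N := |\Eb[\theta_{SAA}^{(N)}] - \theta| = \theta - \Eb[\theta_{SAA}^{(N)}] > 0$, which is a strictly positive quantity for the fixed sample size $N$. By Theorem~\ref{t:kernel-bias-upperbound} there exist constants $L > 0$ and $\alpha \in (0,2]$, depending on the structure of $F(\hat u,\cdot)$ and on $K$ but not on $h_N$, such that $\Eb[\theta_K^{(N)}] - \theta \leq L h_N^\alpha$.

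The key observation is that the right-hand side of this bound vanishes as $h_N \downarrow 0$, while $b_N$ is a fixed positive number independent of $h_N$. Thus one simply chooses $h^*$ small enough that $L(h^*)^\alpha \leq b_N$; explicitly,
\[
h^* = \Big(\frac{b_N}{L}\Big)^{1/\alpha}.
\]
For any $h_N \in (0, h^*)$ the monotonicity of $t \mapsto t^\alpha$ on $[0,\infty)$ yields $L h_N^\alpha \leq L (h^*)^\alpha \leq b_N$, and chaining this with the inequality from Theorem~\ref{t:kernel-bias-upperbound} gives
\[
\Eb[\theta_K^{(N)}] - \theta \;\leq\; L h_N^\alpha \;\leq\; b_N \;=\; |\Eb[\theta_{SAA}^{(N)}] - \theta|,
\]
which is exactly \eqref{bias-interval-upper}.

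I would add one short remark to cover the case in which the kernel estimator itself happens to have non-positive bias: if $\Eb[\theta_K^{(N)}] - \theta \leq 0$, then the desired inequality is immediate because the right-hand side is strictly positive, so the restriction $h_N < h^*$ is only needed in the case that the kernel bias is positive. There is no genuine obstacle in the argument; the content lies entirely in Theorem~\ref{t:kernel-bias-upperbound}. The only subtle point worth flagging for the reader is that $h^*$ a priori depends on $N$ through $b_N$, which is natural since the SAA bias itself typically shrinks with $N$, and hence the admissible bandwidth window contracts accordingly.
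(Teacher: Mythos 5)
Your proof is correct and follows essentially the same route as the paper: both arguments combine the bound $\Eb[\theta_K^{(N)}]-\theta\leq Lh_N^{\alpha}$ from Theorem~\ref{t:kernel-bias-upperbound} with the standing assumption $\theta-\Eb[\theta_{SAA}^{(N)}]>0$ and choose $h^*$ so that $L(h^*)^{\alpha}\leq \theta-\Eb[\theta_{SAA}^{(N)}]$, your version merely making the choice $h^*=\bigl(b_N/L\bigr)^{1/\alpha}$ explicit. Your closing remark that $h^*$ depends on $N$ through the SAA bias is accurate and applies equally to the paper's formulation.
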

Indeed, having in mind that $\theta - \Eb[\theta_{SAA}^{(N)}]>0$, an upper bound $h^*$ exists such that
$L h_N^\alpha \leq \theta - \Eb[\theta_{SAA}^{(N)}]$
whenever $0\leq h_N\leq h^*$.
\begin{theorem}
\label{t:kernel-bias-convex} 
Let the function $F(u,\cdot)$ be convex for any fixed $u\in U$.
Then $\theta_{K}^{(N)}\geq \theta_{SAA}^{(N)}$ for any sample realization.
If additionally, the conditions of Theorem \ref{t:kernel-bias-upperbound} are satisfied, then a positive number $h^*$ exists, such that whenever $h_N\in (0,h^*)$ the following relations holds with the constants $L$ and $\alpha$ from Theorem~\ref{t:kernel-bias-upperbound}
\begin{align}
 &\; |\Eb[\theta_{K}^{(N)}] -\theta | \leq |\Eb[\theta_{SAA}^{(N)}] -\theta |.
\label{bias-interval-final}\\
 &\; \Eb|\theta_{K}^{(N)} -\Eb[\theta_{K}^{(N)}] | \leq \Eb |\theta_{SAA}^{(N)}-\Eb[\theta_{SAA}^{(N)}] | +  Lh_N^\alpha,\label{dispertion}\\
 &\; \Big(\Eb[(\theta_{K}^{(N)} -\Eb[\theta_{K}^{(N)}] )^2]\Big)^{\frac{1}{2}} \leq \Big(\Eb (\theta_{SAA}^{(N)}-\Eb[\theta_{SAA}^{(N)}] )^2\Big)^{\frac{1}{2}} +  Lh_N^\alpha\label{deviation}\\
&\;\Big(\Eb\Big[\big(\theta_K^{(N)} - \theta  \big)^2\Big]\Big)^{\frac{1}{2}}\leq \Big(\Eb\Big[\big(\theta_{SAA}^{(N)} - \theta  \big)^2\Big]\Big)^{\frac{1}{2}} + Lh_N^{\alpha}.
\label{MSE-upper}
\end{align}
\end{theorem}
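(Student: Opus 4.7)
The plan is to reduce all four inequalities to a single pointwise sandwich bound
\[
0 \;\leq\; \Delta_N \;:=\; \theta_{K}^{(N)} - \theta_{SAA}^{(N)} \;\leq\; L h_N^{\alpha},
\]
with the same constants $L$ and $\alpha$ that appear in Theorem~\ref{t:kernel-bias-upperbound}. The lower inequality is itself the first claim of the theorem; the upper inequality is the main estimate to establish; and once both are available, \eqref{bias-interval-final}--\eqref{MSE-upper} follow by short applications of the triangle and Minkowski inequalities.

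For $\Delta_N\geq 0$ I would fix a sample realization together with a feasible $u\in U$, use the symmetry in (k1) so that $\int zK(z)\,dz = 0$, and then apply Jensen's inequality termwise: convexity of $F(u,\cdot)$ gives $\int F(u, X_i+h_N z) K(z)\,dz \geq F(u, X_i)$ for each $i$. Averaging in $i$ and taking $\min_{u\in U}$ on both sides yields $\theta_K^{(N)}\geq\theta_{SAA}^{(N)}$ almost surely, hence also $\Eb[\theta_K^{(N)}]\geq\Eb[\theta_{SAA}^{(N)}]$. Combined with Theorem~\ref{t:kernel-bias-upperbound} and the strict negativity of the SAA bias, \eqref{bias-interval-final} would follow by a two-case split on the sign of $\Eb[\theta_K^{(N)}]-\theta$: if nonnegative, the difference is bounded by $Lh_N^\alpha$, which is at most $|\Eb[\theta_{SAA}^{(N)}]-\theta|$ for $h_N\in(0,h^*)$ as in the preceding corollary; if negative, then $\theta - \Eb[\theta_K^{(N)}]\leq \theta - \Eb[\theta_{SAA}^{(N)}]$ is immediate from the first claim.

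For the upper half of the sandwich I would mimic the proof of Theorem~\ref{t:kernel-bias-upperbound}, replacing $\hat u$ by an SAA minimizer $u_S^*$: feasibility of $u_S^*$ in the kernel problem gives $\theta_K^{(N)} \leq \tfrac{1}{N}\sum_i\int F(u_S^*, X_i + h_N z) K(z)\,dz$, and subtracting $\theta_{SAA}^{(N)} = \tfrac{1}{N}\sum_i F(u_S^*, X_i)$ followed by termwise application of the modulus of continuity produces $\Delta_N \leq \sum_j L_j h_N^{\alpha_j}\bar m_{\alpha_j}(K) \leq L h_N^\alpha$ for $h_N\leq 1$, with exactly the constants produced in Theorem~\ref{t:kernel-bias-upperbound}. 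With $\Delta_N \in [0, L h_N^\alpha]$ pointwise, the identity
\[
\theta_K^{(N)} - \Eb[\theta_K^{(N)}] = \bigl(\theta_{SAA}^{(N)} - \Eb[\theta_{SAA}^{(N)}]\bigr) + \bigl(\Delta_N - \Eb[\Delta_N]\bigr)
\]
together with the pointwise bound $|\Delta_N - \Eb[\Delta_N]|\leq L h_N^\alpha$ yields \eqref{dispertion} by the $L^1$ triangle inequality and \eqref{deviation} by Minkowski's inequality in $L^2$. For \eqref{MSE-upper} I would instead write $\theta_K^{(N)} - \theta = (\theta_{SAA}^{(N)} - \theta) + \Delta_N$ and apply Minkowski's inequality in $L^2$ with $\|\Delta_N\|_{L^2}\leq L h_N^\alpha$.

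The step that I expect to be the main obstacle is precisely the upper bound on $\Delta_N$, because the modulus of continuity has to be invoked at the random SAA minimizer $u_S^*(\omega)\in U$ rather than at the deterministic $\hat u$ for which Theorem~\ref{t:kernel-bias-upperbound} was written. This is resolved under a uniform-in-$u$ modulus of the form $w(t)=\sum_j L_j t^{\alpha_j}$ (in the spirit of condition (c1k) of Corollary~\ref{c:consistency1}, and automatic whenever $F(u,\cdot)$ is Lipschitz with a constant independent of $u$), which I would adopt as the natural working hypothesis. Once it is granted, the remaining manipulations are routine.
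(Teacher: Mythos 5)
Your proposal is correct and follows essentially the same route as the paper's proof: Jensen's inequality with (k1) for the lower bound $\theta_K^{(N)}\geq\theta_{SAA}^{(N)}$, the modulus of continuity evaluated at an SAA minimizer for the pointwise upper bound $\theta_K^{(N)}\leq\theta_{SAA}^{(N)}+Lh_N^\alpha$, and then triangle/Minkowski arguments on the resulting sandwich (your direct Minkowski step for \eqref{MSE-upper} replaces the paper's Banach-lattice monotonicity argument, but the two are interchangeable). Your closing caveat is well spotted: the paper's own proof also applies the modulus at the random SAA solution $\tilde u$ even though Theorem~\ref{t:kernel-bias-upperbound} only postulates it at the fixed $\hat u$, so the uniform-in-$u$ modulus you adopt (as in (c1k)) is exactly the reading the paper implicitly uses.
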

\begin{proof}
Substituting $z= \frac{x-X_i}{h_N}$, we obtain for any $i=1,\dots, N$
\begin{equation}
\label{in:convex}
\int F(u,x)K\Big(\frac{x-X_i}{h_N}\Big)\frac{1}{h_N^m}dx = \int F(u,X_i+h_Nz)K(z)dz \geq F(u,X_i),
\end{equation}
where the last relation follows from Jensen's inequality and assumption (k1).
For any realization of a sample, the following inequality is true
\begin{equation}
\label{bias-bound}
\begin{aligned}
\theta_K^{(N)}-&\theta_{SAA}^{(N)}\\ &= 
\min_{u\in U} \Bigg[\frac{1}{N}\sum_{i=1}^N \int F(u,x)K\Big(\frac{x-X_i}{h_N}\Big)\frac{1}{h_N^m}dx\Bigg] -
\min_{u\in U} \Big[\frac{1}{N}\sum_{i=1}^N F(u, X_i)\Big]\\
&\geq 
\min_{u\in U} \Bigg[\frac{1}{N}\sum_{i=1}^N \int F(u,x)K\Big(\frac{x-X_i}{h_N}\Big)\frac{1}{h_N^m}dx -
\frac{1}{N}\sum_{i=1}^N F(u, X_i)\Bigg]\\
& = \min_{u\in U} \frac{1}{N}\sum_{i=1}^N \Big( \int F(u,X_i+h_Nz)K(z) dz -  F(u, X_i)\Big)\geq 0.
\end{aligned}
\end{equation}
The last inequality in the chain follows by \eqref{in:convex}. Thus 
\[
\theta_K^{N} \geq \theta_{SAA}^{(N)}\;\text{ and }\;  \theta_K^{N} -\theta \geq \theta_{SAA}^{(N)} -\theta  \quad\text{a.s.},
\]
showing the first claim of the statement. This implies
\begin{equation}
\label{kernel-bias-lowerbound}
\Eb[\theta_K^{(N)}] -\theta \geq \Eb[\theta_{SAA}^{(N)}] -\theta.
\end{equation}
Putting \eqref{kernel-bias-lowerbound}  together with \eqref{bias-interval-upper}, we obtain
\[
\Eb[\theta_{SAA}^{(N)}]-\theta  \leq \Eb[\theta_{K}^{(N)}] -\theta  \leq \theta  - \Eb[\theta_{SAA}^{(N)}], 
\]
or equivalently  $|\Eb[\theta_{K}^{(N)}] -\theta  |\leq |\Eb[\theta_{SAA}^{(N)}] -\theta |$ as claimed in \eqref{bias-interval-final}. 

To show the inequality between the dispersions, we observe first that for any solution $\tilde{u}$ of problem \eqref{SAAestimator}, the following chain of inequalities apply:
\begin{align}
\theta_K^{(N)} - \theta_{SAA}^{(N)} 
& =
\min_{u\in U}\frac{1}{N} \sum_{i=1}^N \int  F(u,X_i+h_Nz)K(z)dz -
\min_{u\in U} \frac{1}{N} \sum_{i=1}^N F(u,X_i)\notag\\
& \leq 
\frac{1}{N} \sum_{i=1}^N \int F(\tilde{u},x_i+h_Nz) K(z)dz  - \frac{1}{N} \sum_{i=1}^N  F(\tilde{u},X_i)\notag\\
& = {}
\frac{1}{N} \sum_{i=1}^N \int \Big( F(\tilde{u},X_i+h_Nz)- F(\tilde{u},X_i)\Big) K(z)dz \notag\\
& \leq 
\sum_{j=1}^\ell L_jh_N^{\alpha_j} \int \|z\|^{\alpha_j} K(z)dz \leq Lh_N^\alpha, \label{delta-exp}
\end{align}
where the constants $L$ and $\alpha$ are the same as in \eqref{bias-interval-upper}.
Therefore, we have
\begin{equation}
\label{basic_pointwise}
\theta_{SAA}^{(N)}\leq \theta_K^{(N)} \leq  \theta_{SAA}^{(N)} + Lh_N^\alpha.
\end{equation}
This entails additionally, 
\[
-\Eb[\theta_{SAA}^{(N)}]-Lh_N^\alpha \leq -\Eb[\theta_K^{(N)}] \geq  -\Eb[\theta_{SAA}^{(N)}].
\]
Adding the two relations together, we get
\begin{equation}
\label{mainpointwise}
\theta_{SAA}^{(N)}-\Eb[\theta_{SAA}^{(N)}]-Lh_N^\alpha \leq \theta_K^{(N)} -\Eb[\theta_K^{(N)}]\leq  \theta_{SAA}^{(N)}-\Eb[\theta_{SAA}^{(N)}] + Lh_N^\alpha.
\end{equation}
We consider $U=\theta_{K}^{(N)}-\Eb[\theta_{K}^{(N)}]$ and $V=\theta_{SAA}^{(N)}-\Eb[\theta_{SAA}^{(N)}]$.
We infer that 
\begin{align*}
\Eb\big|\theta_K^{(N)} -\Eb[\theta_K^{(N)}]\big| & = \Eb|U| \leq 
\Eb|V| +
\Eb|U-V| \leq \Eb|V| + Lh_N^\alpha\\
& = 
\Eb\big|\theta_{SAA}^{(N)} -\Eb[\theta_{SAA}^{(N)}]\big| +Lh_N^\alpha
\end{align*}
This proves inequality \eqref{dispertion}.\\

Now, we consider $U$ and $V$ as random variables in $\mathcal{L}_2$. Using \eqref{mainpointwise} again and the triangle inequality for the norm, we get
\begin{align*}
\Big(\Eb[(\theta_{K}^{(N)} -\Eb[\theta_{K}^{(N)}] )^2]\Big)^{\frac{1}{2}}
& =\|U\|_{\Lc_2} \leq \|V\|_{\Lc_2}+ \|U-V\|_{\Lc_2} \\
&\leq \|V\|_{\Lc_2}+ Lh_N^\alpha = \Big(\Eb[(\theta_{SAA}^{(N)} -\Eb[\theta_{SAA}^{(N)}] )^2]\Big)^{\frac{1}{2}} + Lh_N^\alpha. 
\end{align*}  
This proves \eqref{deviation}.\\

Now, we turn to the upper bound on the error of the kernel estimator. 
Using inequalities \eqref{basic_pointwise}, we get
\[
|\theta_{K}^{(N)} -\theta| \leq |\theta_{SAA}^{(N)}-\theta| + 
|\theta_{K}^{(N)} -\theta_{SAA}^{(N)}| \leq |\theta_{SAA}^{(N)}-\theta| + Lh_N^\alpha.
\]
Denoting $Y=|\theta_{K}^{(N)} -\theta|$ and $W=|\theta_{SAA}^{(N)}-\theta|$, we view $Y$ and $W$ as two non-negative random variables in $\mathcal{L}_2$.
Recall that $\mathcal{L}_2$ is a Banach lattice and therefore, the relation between $Y$ and $W$ entails the same relation between their 
$\Lc_2$-norms, i.e.,
\[
\Big(\Eb[(\theta_{K}^{(N)} -\theta])^2]\Big)^{\frac{1}{2}}
= \|Y\|_{\Lc_2}\leq \|W\|_{\Lc_2} + Lh_N^\alpha = 
\Big(\Eb[(\theta_{SSA}^{(N)} -\theta])^2]\Big)^{\frac{1}{2}} + Lh_N^\alpha.
\]
This shows \eqref{MSE-upper} and completes the proof.
\end{proof}

Some remarks are in order. 
First, we note that every convex function is locally Lipschitz-continuous. However, the existence of the continuity modulus is essential and it would correspond to the existence of a global Lipschitz constant. This property is satisfied for functions, which are piecewise linear, for example. Global Lipschitz continuity would be present also when the random vector $X$ has a bounded support. 

The comparisons between the deviations and the standard error 
of the two estimators, $\theta_{SSA}^{(N)}$ and $\theta_K^{(N)}$, together with Corollary 1, show that the improvement of the bias does not lead does not lead to lead to a substantial increase of error since the range of bias improvement and the change in dispersion are the same.
Our numerical experiments show that  no increase of variance occur. 
Further investigations in the future might bring sharper evaluation of the error those estimators have.    

Now, we consider the situation when convexity of $F$ with respect to the second argument might not be present. 
\begin{theorem}
\label{kernel-bias-smooth}
Assume that the kernel $K$ satisfies (k1)-(k2). Suppose that the function $F(u,\cdot)$ is continuously differentiable and its gradient $\nabla F(u,x)$ has a modulus of continuity of form 
\[
\|\nabla F(u,x+y) - \nabla F(u,x)\| \leq \sum_{j=1}^\ell L_j\|y\|^{\alpha_j} 
\]
with ${\alpha_j}\in(0,1]$, $\ell\geq 1$.
Then constant $\alpha\in (0,1]$ and $L>0$ exist such that for bandwidth $h_N<1$, we have 
\begin{align}
&\;\big|\theta_K^{(N)} -\theta\big|\leq \big|\theta_{SAA}^{(N)}-\theta \big|+ Lh_N^{1+\alpha}\; \text{ a.s. and, hence,}\\
&\;\big|\Eb[\theta_K^{(N)}] -\theta\big|\leq \big|\Eb[\theta_{SAA}^{(N)}]-\theta \big|+ Lh_N^{1+\alpha}\label{dispersion1}\\
 &\; \Big(\Eb[(\theta_{K}^{(N)} -\Eb[\theta_{K}^{(N)}] )^2]\Big)^{\frac{1}{2}} \leq \Big(\Eb (\theta_{SAA}^{(N)}-\Eb[\theta_{SAA}^{(N)}] )^2\Big)^{\frac{1}{2}} +  2Lh_N^{1+\alpha}\label{deviation1}\\
&\;\Big(\Eb\Big[\big(\theta_K^{(N)} - \theta  \big)^2\Big]\Big)^{\frac{1}{2}}\leq \Big(\Eb\Big[\big(\theta_{SAA}^{(N)} - \theta  \big)^2\Big]\Big)^{\frac{1}{2}} + Lh_N^{1+\alpha}.
\label{MSE-upper1}
\end{align}
\end{theorem}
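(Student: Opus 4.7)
The plan is to upgrade Theorem~\ref{t:kernel-bias-upperbound}'s bound from $h_N^\alpha$ to $h_N^{1+\alpha}$ by exploiting the symmetry (k1) together with the $C^1$ smoothness of $F(u,\cdot)$, and then to reuse the algebraic steps of Theorem~\ref{t:kernel-bias-convex} essentially verbatim. The key new ingredient is a first-order Taylor expansion of $F(u,X_i+h_N z)$ around $X_i$: writing
\[
F(u, X_i + h_N z) - F(u, X_i) = h_N \int_0^1 \langle \nabla F(u, X_i + t h_N z), z\rangle \, dt,
\]
adding and subtracting $\nabla F(u, X_i)$ inside the inner product, and then integrating against $K(z)\,dz$, the term $h_N\,\langle \nabla F(u, X_i), \int z K(z)\,dz\rangle$ vanishes by (k1). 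The remainder is controlled by Cauchy--Schwarz and the assumed H\"older-type modulus on $\nabla F$, giving a uniform (in $u$ and $X_i$) estimate
\[
\Bigl|\int F(u, X_i + h_N z) K(z)\,dz - F(u, X_i)\Bigr| \leq \sum_{j=1}^\ell \frac{L_j \bar{m}_{1+\alpha_j}(K)}{1+\alpha_j}\, h_N^{1+\alpha_j}\leq L h_N^{1+\alpha},
\]
with $\alpha = \min_{1\leq j\leq\ell}\alpha_j$ and $L = \sum_{j=1}^\ell L_j \bar{m}_{1+\alpha_j}(K)/(1+\alpha_j)$; the moments $\bar m_{1+\alpha_j}(K)$ are finite under (k2) since $1+\alpha_j\in(1,2]$.

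With this uniform estimate in hand, the argument after \eqref{basic_pointwise} carries over with one structural change: convexity is no longer available, so I cannot conclude $\theta_K^{(N)} \geq \theta_{SAA}^{(N)}$ and must instead rely on a two-sided pointwise bound. Substituting an SAA-optimal $\tilde u$ into the kernel problem for the upper bound and a kernel-optimal solution into the SAA problem for the lower bound yields the a.s. inequality
\[
\theta_{SAA}^{(N)} - L h_N^{1+\alpha} \leq \theta_K^{(N)} \leq \theta_{SAA}^{(N)} + L h_N^{1+\alpha}.
\]
From here the four stated inequalities are routine: the almost-sure bound on $|\theta_K^{(N)}-\theta|$ is the triangle inequality; taking expectations gives \eqref{dispersion1}; \eqref{deviation1} follows by setting $U = \theta_K^{(N)} - \Eb[\theta_K^{(N)}]$ and $V = \theta_{SAA}^{(N)} - \Eb[\theta_{SAA}^{(N)}]$, noting $|U-V| \leq 2 L h_N^{1+\alpha}$ a.s.\ (the factor $2$ coming from the independent bounds on $|\theta_K^{(N)} - \theta_{SAA}^{(N)}|$ and on $|\Eb[\theta_K^{(N)}] - \Eb[\theta_{SAA}^{(N)}]|$) and invoking the $\Lc_2$ triangle inequality; and \eqref{MSE-upper1} follows from the pointwise $|\theta_K^{(N)}-\theta| \leq |\theta_{SAA}^{(N)}-\theta| + L h_N^{1+\alpha}$ after an $\Lc_2$ triangle inequality.

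The main obstacle is dispensing with convexity: losing the one-sided relation $\theta_K^{(N)} \geq \theta_{SAA}^{(N)}$ would naively cost us control on the bias direction, and the first-order rate $h_N^\alpha$ of Theorem~\ref{t:kernel-bias-upperbound} is in general not small enough to dominate the (unknown) SAA bias. The saving step is the cancellation of the linear Taylor term via the kernel's symmetry (k1), which is precisely why the regularity hypothesis is placed on $\nabla F$ rather than on $F$ itself, and what boosts the rate from $h_N^\alpha$ to $h_N^{1+\alpha}$.
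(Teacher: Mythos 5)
Your proposal is correct and follows essentially the same route as the paper's proof: the same cancellation of the linear Taylor term via the symmetry (k1), the same two-sided pointwise bound $|\theta_K^{(N)}-\theta_{SAA}^{(N)}|\leq Lh_N^{1+\alpha}$ obtained by plugging each problem's optimizer into the other, and the same $\Lc_2$/triangle-inequality derivations of \eqref{dispersion1}--\eqref{MSE-upper1}, including the factor $2$ in \eqref{deviation1}. The only cosmetic differences are that you use the integral form of Taylor's theorem where the paper uses the mean-value theorem (yielding your slightly sharper constant $L_j\bar{m}_{1+\alpha_j}(K)/(1+\alpha_j)$ in place of $L_j\bar{m}_{1+\alpha_j}(K)$), which does not change the argument.
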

\begin{proof}
Denote the optimal solution of the estimation problem in \eqref{p:kernel} by $\hat{u}_K^{(N)}$.
Using the mean-value theorem and substituting $z= \frac{x-X_i}{h_N}$, we obtain for any $i=1,\dots, N$ the following chain of relations: 
\begin{align}
\int & F(u,x)K\Big(\frac{x-X_i}{h_N}\Big)\frac{1}{h_N^m}dx  = \int F(u,X_i+h_Nz)K(z)dz\notag\\
 &= \int \Big( F(u,X_i) + h_N \big\langle\nabla_x F(u,X_i+ \zeta_i h_Nz),z\big\rangle \Big)  K(z)dz\notag\\
 & = F(u,X_i) + \int h_N \big\langle\nabla_x F(u,X_i),z\big\rangle  K(z)dz \\
 &\quad  +\int  h_N \big\langle\nabla_x F(u,X_i+ \zeta_i h_Nz)-\nabla_x F(u,X_i),z\big\rangle K(z)dz\notag\\
 & = F(u,X_i) + \int h_N \big\langle\nabla_x F(u,X_i+ \zeta_i h_Nz)-\nabla_x F(u,X_i),z\big\rangle K(z)dz. \label{gradient-intermed}
\end{align}
Here $\zeta_i\in (0,1)$, $i=1,\dots, N$. Using Cauchy-Schwarz inequality in \eqref{gradient-intermed} and the modulus of continuity for the gradient, we get
\[
\big|\big\langle\nabla_x F(u,X_i+ \zeta h_Nz)-\nabla_x F(u,X_i),z\big\rangle \big|\leq \sum_{j=1}^\ell L_j h_N^{\alpha_j}\|z\|^{1+\alpha_j}.
\]
Assuming $h_N<1$, we use $\alpha$ equal to the smallest of $\alpha_j$ to obtain the estimate
\[
\big|\big\langle\nabla_x F(u,X_i+ \zeta h_Nz)-\nabla_x F(u,X_i),z\big\rangle\big| \leq h_N^{\alpha}\sum_{j=1}^\ell L_j\|z\|^{1+\alpha_j}.
\]
Using this inequality and \eqref{gradient-intermed}, we obtain 
\begin{align*}
\theta_K^{(N)}-\theta_{SAA}^{(N)} &= 
\min_{u\in U} \Bigg[\frac{1}{N}\sum_{i=1}^N \int F(u,X_i+h_Nz)K(z)dz\Bigg] -
\min_{u\in U} \Big[\frac{1}{N}\sum_{i=1}^N F(u, X_i)\Big]\\
&\geq 
\frac{1}{N}\sum_{i=1}^N \int F(u_K^{(N)},X_i+h_Nz)K(z)dz -
\frac{1}{N}\sum_{i=1}^N F(u_K^{(N)}, X_i)\\
 &=  
\frac{1}{N}\sum_{i=1}^N \int \Big(F(u_K^{(N)},X_i+h_Nz)- F(u_K^{(N)}, X_i) \Big) K(z)dz\\
&=  
\frac{1}{N}\sum_{i=1}^N \int h_N \big\langle\nabla_x F(u,X_i+ \zeta_i h_Nz)-\nabla_x F(u,X_i),z\big\rangle K(z)dz \\
&\geq -Lh_N^{1+\alpha},
\end{align*}
where $L= \sum_{j=1}^\ell L_j \bar{m}_{1+\alpha_j}(K)$.
On the other hand, we can use an optimal solution of the SAA problem denoted $\hat{u}_{SAA}^{(N)}$ in a similar way.  Then we obtain an upper bound for the difference of the kernel and empirical estimate:
\begin{align*}
\theta_K^{(N)}-\theta_{SAA}^{(N)}
&\leq 
\frac{1}{N}\sum_{i=1}^N \int F(u_{SAA}^{(N)},X_i+h_Nz)K(z)dz -
\frac{1}{N}\sum_{i=1}^N F(u_{SAA}^{(N)}, X_i)\\ 
&\leq Lh_N^{1+\alpha}.
\end{align*}
Putting the two bounds together, we obtain
\begin{equation}
\label{mainbounds}
-Lh_N^{1+\alpha}\leq \theta_K^{(N)} - \theta_{SAA}^{(N)} \leq Lh_N^{1+\alpha}\; \text{ a.s.}
\end{equation}
which implies the first claim in the theorem.
Relation \eqref{dispersion1} follows by taking expectation on both sides of inequality \eqref{mainbounds}.

The comparison of the error \eqref{MSE-upper1} follows from \eqref{mainbounds} by using the argument about the norm of Banach lattices for positive random variables in the same way as for \eqref{MSE-upper}. 
Finally, from \eqref{mainbounds}, we obtain
\[
-Lh_N^{1+\alpha}\leq -\Eb[\theta_K^{(N)}] +\Eb[\theta_{SAA}^{(N)}] \leq Lh_N^{1+\alpha}\; \text{ a.s.}
\]
Adding this to \eqref{mainbounds} entails
\[
-2Lh_N^{1+\alpha}\leq \theta_K^{(N)}-\Eb[\theta_K^{(N)}] - \big(\theta_{SAA}^{(N)}-\Eb[\theta_{SAA}^{(N)}]\big) \leq 2Lh_N^{1+\alpha}\; \text{ a.s.}
\]
Using the triangle inequality for the norm in $\Lc_2$ as in the proof of \eqref{deviation}, we infer from the last displayed inequality the relation \eqref{deviation1}. 
\end{proof}

Assuming further smoothness for the objective function and taking a special kernel, we could provide an estimate with a smaller bias than the one given by SAA.

\begin{theorem}
\label{kernel-bias-smooth2}
Consider the uniform kernel $K$ on the unit ball $B$ in $\Rb^m$. Assume that the function $F(u,\cdot)$ is twice continuously differentiable with the following properties
\begin{itemize}
\item $\Eb[\nabla^2_x F(u,X)]$ has a trace bounded from below by $\bar{\lambda} >0$ for all $u\in U$;
 \item $\nabla^2_x F(u,x)$ has a modulus of continuity of form 
\[
\|\nabla^2_v F(u,x+y) - \nabla^2_x F(u,x)\| \leq \sum_{j=1}^\ell L_j\|y\|^{\alpha_j}
\]
with $\alpha_j\geq 0$, $j=1,\dots\ell$, $\ell\geq 1$.
\end{itemize}
Then a positive number $h^*$ exists, such that whenever $h_N\in (0,h^*)$ the bias of the kernel-based estimator is smaller than the one of SAA, i.e., relation \eqref{bias-interval-final} holds.
\end{theorem}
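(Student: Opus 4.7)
The plan is to Taylor-expand $F(u, X_i+h_Nz)$ to second order around $X_i$ and to exploit the spherical symmetry of the uniform ball kernel to eliminate the first-order term, producing a \emph{positive} quadratic correction. Writing
\[
F(u, X_i+h_Nz) = F(u, X_i) + h_N\langle\nabla_x F(u, X_i), z\rangle + \tfrac{h_N^2}{2}\langle z, \nabla^2_x F(u, X_i)z\rangle + \rho_i(u, z, h_N),
\]
the assumed H\"older-type modulus of continuity of the Hessian controls $\rho_i$. Integration against $K$ kills the linear term by (k1); the quadratic term contributes $\frac{h_N^2 c_m}{2}\,\mathrm{tr}(\nabla^2_x F(u,X_i))$ with $c_m=\frac{1}{m+2}>0$; and, since the uniform kernel on the unit ball has all moments, the integrated remainder obeys $|R_N(u)|\leq Ch_N^{2+\alpha}$ uniformly in $u$ and in the realized sample, where $\alpha=\min_j\alpha_j$. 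Setting $G_N(u)=\frac{1}{N}\sum_i F(u,X_i)$ and $T_N(u)=\frac{1}{N}\sum_i \mathrm{tr}(\nabla^2_x F(u,X_i))$, this yields the decomposition
\[
H_N(u):=\frac{1}{N}\sum_{i=1}^N \int F(u,X_i+h_Nz)K(z)\,dz \;=\; G_N(u) + \tfrac{h_N^2 c_m}{2}\, T_N(u) + R_N(u).
\]

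To establish \eqref{bias-interval-final} it suffices to verify $\Eb[\theta_{SAA}^{(N)}]\leq \Eb[\theta_K^{(N)}]\leq 2\theta-\Eb[\theta_{SAA}^{(N)}]$. For the upper inequality, fix $u^*\in S$; since $\theta_K^{(N)}\leq H_N(u^*)$ pointwise, taking expectation gives
\[
\Eb[\theta_K^{(N)}]\;\leq\;\theta + \tfrac{h_N^2 c_m M^*}{2} + Ch_N^{2+\alpha},
\]
with $M^*:=\Eb[\mathrm{tr}(\nabla^2_x F(u^*,X))]$ finite under the stated smoothness. Because the SAA bias $\theta-\Eb[\theta_{SAA}^{(N)}]$ is assumed strictly positive, shrinking $h_N$ below a threshold $h_1^*$ forces the right-hand correction below $\theta-\Eb[\theta_{SAA}^{(N)}]$, giving $\Eb[\theta_K^{(N)}]-\theta\leq \theta-\Eb[\theta_{SAA}^{(N)}]$.

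For the lower inequality I use optimality of $u_K^{(N)}$ to write
\[
\theta_K^{(N)} = G_N(u_K^{(N)}) + \tfrac{h_N^2 c_m}{2}T_N(u_K^{(N)}) + R_N(u_K^{(N)}) \;\geq\; \theta_{SAA}^{(N)} + \tfrac{h_N^2 c_m}{2}T_N(u_K^{(N)}) - Ch_N^{2+\alpha},
\]
since $G_N(u_K^{(N)})\geq \theta_{SAA}^{(N)}$; taking expectations yields $\Eb[\theta_K^{(N)}]-\Eb[\theta_{SAA}^{(N)}]\geq \tfrac{h_N^2 c_m}{2}\Eb[T_N(u_K^{(N)})]-Ch_N^{2+\alpha}$. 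The main obstacle is that $u_K^{(N)}$ is built from the very sample appearing inside $T_N$, so $\Eb[T_N(u_K^{(N)})]$ does not factor directly into the hypothesis $\Eb[\mathrm{tr}(\nabla^2_x F(u,X))]\geq\bar\lambda$. To bypass this I invoke a uniform law of large numbers on the class $\{\mathrm{tr}(\nabla^2_x F(u,\cdot)):u\in U\}$: under the regularity of $F$ and compactness of $U$ this class is Glivenko--Cantelli, so $\varepsilon_N:=\sup_{u\in U}|T_N(u)-\Eb[\mathrm{tr}(\nabla^2_x F(u,X))]|$ tends to $0$ almost surely and in $L^1$. Then $T_N(u_K^{(N)})\geq \bar\lambda-\varepsilon_N$, hence $\Eb[T_N(u_K^{(N)})]\geq\bar\lambda/2$ for $N$ large enough. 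Choosing $h_N<h_2^*$ small enough to absorb the $Ch_N^{2+\alpha}$ term gives $\Eb[\theta_K^{(N)}]\geq\Eb[\theta_{SAA}^{(N)}]$, and setting $h^*=\min(h_1^*,h_2^*)$ completes the proof.
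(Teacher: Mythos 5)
Your proof follows essentially the same route as the paper's: a second-order Taylor expansion in which the symmetry of the uniform kernel annihilates the first-order term, the trace condition turns the quadratic term into a positive correction of order $h_N^2$ bounding $\Eb[\theta_K^{(N)}]-\Eb[\theta_{SAA}^{(N)}]$ from below, and plugging a true optimal solution $u^*\in S$ gives an $O(h_N^2)$ upper bound on the bias, after which both estimates are absorbed by the strictly negative SAA bias for $h_N<h^*$. Your explicit constant $c_m=1/(m+2)$ replaces the paper's isotropy/eigendecomposition computation of $\int_B \langle z, Hz\rangle\,dz = r\,\mathrm{tr}(H)$; these are the same calculation. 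The one genuine difference is your treatment of the lower bound: you correctly observe that $u_K^{(N)}$ depends on the sample, so $\Eb[T_N(u_K^{(N)})]$ cannot be bounded below by $\bar{\lambda}$ directly --- a point the paper's own proof silently elides by writing $\Eb[\nabla^2_x F(\hat{u}_K,X)]$ as though $\hat{u}_K$ were deterministic --- and you patch it with a uniform law of large numbers for the Hessian traces. This is more careful than the published argument, but it carries two costs you should acknowledge: (i) the Glivenko--Cantelli property of the class $\{\mathrm{tr}\,\nabla^2_x F(u,\cdot):u\in U\}$ is not implied by the theorem's stated hypotheses; you would need continuity of the Hessian in $u$ and an integrable envelope for the traces (for instance, integrability of $\mathrm{tr}\,\nabla^2_x F(u_0,X)$ at a single point combined with the uniform H\"older modulus would supply the envelope), so this assumption should be made explicit; and (ii) your conclusion holds only ``for $N$ large enough,'' a qualifier absent from the statement, though in fairness $h^*$ must depend on $N$ in any reading, since the SAA bias $\theta-\Eb[\theta_{SAA}^{(N)}]$ shrinks with $N$. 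Finally, note that both you and the paper implicitly require $\alpha=\min_j \alpha_j>0$: if some $\alpha_j=0$, as the statement's condition $\alpha_j\geq 0$ permits, the remainder $Ch_N^{2+\alpha}$ is of the same order $h_N^2$ as the main trace term and cannot be absorbed by shrinking the bandwidth.
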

\begin{proof}
Denote the optimal solution of the estimation problem in \eqref{p:kernel} by $\hat{u}_K$. The kernel is defined by 
\[
 K(z) = \begin{cases} \frac{1}{\text{vol}(B)} & \ \text{if}\ \|z\| \le 1,\\
 0 & \text{otherwise}
 \end{cases} 
 \quad\text{where }\; \text{vol}(B)=\frac{\pi^{m/2} } {\Gamma(1+m/2)}
 \]
with $\text{vol}(B)$ standing for the volume of the $m$-dimensional unit ball determined by the Euclidean norm. 
Using the mean-value theorem, we obtain with some $\zeta_i\in (0,1)$ for any $i=1,\dots, N$ the following chain of equations:
\begin{align*}
\int_B & F(u,X_i+h_Nz)K(z)dz\\
 &= \int_B \Big( F(u,X_i) + h_N \big\langle\nabla_x F(u,X_i),z\big\rangle + \frac{h_N^2}{2}\big\langle z, \nabla^2_xF(u,X_i+\zeta_i h_Nz) z \big\rangle\Big)\; K(z)dz\\
 &= F(u,X_i) + \frac{h_N^2}{2\text{vol}(B)}\int_B \big\langle z, \nabla^2_xF(u,X_i+\zeta_i h_Nz) z \big\rangle\; dz \\
 &= F(u,X_i) + \frac{h_N^2}{2\text{vol}(B)}\int_B \big\langle z, \nabla^2_xF(u,X_i) z \big\rangle\; dz \\
 &\qquad + \frac{h_N^2}{2\text{vol}(B)}\int_B \big\langle z, \big[\nabla^2_xF(u,X_i+\zeta_i h_Nz) - \nabla^2_xF(u,X_i)\big]z \big\rangle\; dz. 
\end{align*}
In this chain, the second equation holds due to (k1).
The last term at the right-hand side can be estimated by the modulus of the Hessian in a similar ways as in the previous theorem assuming that $h_N<1.$ We have 
\[
- L \zeta^\alpha h_N^\alpha\leq \int_B \big\langle z, \big[\nabla_x^2F(u,X_i+\zeta h_Nz) - \nabla^2_xF(u,X_i)\big]z \big\rangle\; dz \leq L\zeta^\alpha h_N^\alpha.
\]
Using an optimal solution $\hat{u}_K$ of problem \eqref{p:kernel}, we obtain the following estimate
\begin{align*}
\Eb[\theta_K] & - \Eb[\theta_{SAA}^{(N)}]  \\
&= 
\mathbb{E} \min_{u\in U} \Big[\frac{1}{N}\sum_{i=1}^N \int_B F(u,x)K\Big(\frac{x-X_i}{h_N}\Big)\frac{1}{h_N^m}dx\Big] -
\mathbb{E} \min_{u\in U} \Big[\frac{1}{N}\sum_{i=1}^N F(u, X_i)\Big]\\
& \geq \Eb \left[\frac{1}{N}\sum_{i=1}^N \Big( \int_B F(\hat{u}_K,X_i+h_Nz)K(z) dz -  F(\hat{u}_K, X_i)\Big)\right]\\
& \geq 
\frac{h_N^2}{2\text{vol}(B)} 
 \int_B \langle z, \Eb[\nabla^2_x F(\hat{u}_K,X)] z \rangle dz - \frac{Lh_N^{2+\alpha}}{2\text{vol}(B)}.
\end{align*}
The sign of the fist term depends on the sign of the trace of the matrix
$H=\Eb[\nabla^2_x F(\hat{u}_K,X)]$, which is assumed positive. To see this, recall that 
$\nabla^2_x F(\hat{u}_K,X)$ is symmetric and thus $H$ is diagonalizable and representable as 
\[
H = \sum_{i=1}^m \lambda_i y^i (y^i)^\top,
\]
where $\lambda_i$ are the eigenvalues and the $y^i$ are the respective eigenvectors with $y^i$, $i=1,\dots m$ constituting an orthonormal basis of $\Rb^m.$
We have $\langle z, H z \rangle = \sum_{i=1}^m \lambda_i (\langle z, y^i \rangle)^2$ and, thus, 
\[
\int_B \langle z, \Eb[\nabla^2_x F(\hat{u}_K,X)] z \rangle dz =
\sum_{i=1}^m \lambda_i \int_B (\langle z, y^i \rangle)^2\, dz
\]
The integral $r=\int_B (\langle z, y^i \rangle)^2\, dz$ can be calculated exactly but it is sufficient to observe that its value $r$ is the same for all $i$ due to the isotropy of the unit ball (w.r.to the Euclidean norm). 
Therefore, 
\[
\int_B \langle z, \Eb[\nabla^2_x F(\hat{u}_K,X)] z \rangle dz = r\sum_{i=1}^m \lambda_i\,\geq r\bar{\lambda}>0.
\]
We obtain
\[
\Eb[\theta_K] - \Eb[\theta_{SAA}^{(N)}]  \geq 
\frac{h_N^2}{2\text{vol}(B)}\Big( r\bar{\lambda}- Lh_N^\alpha\Big)
\]
Therefore, the right hand side of the inequalities will be positive for sufficiently small bandwidth, yielding
\begin{equation}
\label{bias-uniformK-lowerbound}
\Eb[\theta_K] \geq \Eb[\theta_{SAA}^{(N)}] \text{ a.s.}
\end{equation}

In a similar way, using the optimal solution $\hat{u}$ of problem \eqref{p:basic}, we can obtain an upper bound
for the bias of the kernel smoothing problem
\begin{align}
\Eb[\theta_K] - \theta  
& =
\Eb\Big[\min_{u\in U}\frac{1}{N} \sum_{i=1}^N \int_B  F(u,X_i+h_Nz)K(z)dz \Big]-
\min_{u\in U} \frac{1}{N} \sum_{i=1}^N \Eb[F(u,X_i)]\notag\\
& \leq \Eb \left[\frac{1}{N}\sum_{i=1}^N \Big( \int_B F(\hat{u},X_i+h_Nz)K(z) dz -  F(\hat{u}, X_i)\Big)\right]\notag\\
& \leq 
\frac{h_N^2}{2\text{vol}(B)} 
 \int_B \langle z, \Eb[\nabla^2_x F(\hat{u},X)] z \rangle dz + \frac{Lh_N^{2+\alpha}}{2\text{vol}(B)}\label{bias-uniformK-upperbound}.
\end{align}

The expression at the right hand side is smaller that
$\theta - \Eb[\theta_{SAA}^{(N)}]>0$ for sufficiently small bandwidth $h_N$.
Therefore, putting \eqref{bias-uniformK-lowerbound} and  \eqref{bias-uniformK-upperbound}
\[
\Eb[\theta_{SAA}^{(N)}]  - \theta\leq \Eb[\theta_K] - \theta \leq \theta - \Eb[\theta_{SAA}^{(N)}],
\]
which concludes the proof.
\end{proof}

One question discussed extensively in the statistic literature is about the choice of the parameter $h_N$.
The discussion in \cite[Section 1.2.4]{tsybakov2008introduction} shows that the approach on determining optimal bandwidth for a fixed data density can be seriously criticized. Our numerical experiments show that in the context of optimization, even if the data is normally distributed, that is, $X$ has a density and we select the recommended bandwidth for the kernel density estimator, the result may not be the best. 
One could use the rule of thumb for the normal distribution as a good practical choice. The bandwidth $h$ can be selected according to the Sheather-Jones plug-in method or the Silverman’s rule of thumb \cite{sheather2004density}. The  Sheather-Jones plug-in method recommends to set the bandwidth $h=1.06 \hat{\sigma} N^{-1/5}$, where $\hat{\sigma}$ is the sample standard deviation. The Silverman's rule recommends the bandwidth be chosen as $h=0.9 a N^{-1/5}$, where $a= \min\{\hat{\sigma}, \text{(sample interquartile range)}/1.34\}.$

Another practical procedure might be the following. Corollary 1 shows that the bias of the kernel estimator becomes smaller than the bias of the SAA if $Lh_N^\alpha$ is smaller than the absolute value of the bias of the SAA estimator. The constants $L$ and $\alpha$ can be evaluated independently of the random data using the modulus of continuity and the moments of the chosen kernel. We could estimate the bias of the SAA problem  if we have a way of evaluating $\Eb[F(\bar{u}, X)]$ more precisely at a given fixed point $\bar{u}.$
We know that
\[
\theta_{SAA}^{(N)} - \theta \leq \frac{1}{N} \sum_{i=1}^N F(\hat{u},X_i)- \Eb[F(\hat{u}, X)], 
\] 
where $\hat{u}$ is an optimal solution of problem \eqref{p:basic}.
Therefore, we could solve the SAA problem (potentially, using a smaller sample), then set
$\bar{u}$ to be the obtained solution and estimate $\Eb[F(\bar{u}, X)]. $ 
Clearly, $\Eb[F(\bar{u}, X)]\geq \Eb[F(\hat{u}, X)].$ Therefore, we obtain statistical bounds for
$\theta$ as follows: 
$\frac{1}{N} \sum_{i=1}^N F(\bar{u},X_i)\leq \theta\leq \Eb[F(\bar{u}, X)].$
We can use this relation to derive an upper bound for 
$Lh_N^\alpha$ and subsequently for the bandwidth. 

Another approach might be based on the asymptotic analysis of the SAA solution.
It is known that the bias of $\theta_{SAA}^{(N)} $  is of order $o(N^{-\frac{1}{2}})$ if 
the true problem has a unique solution while the order is $O(N^{-\frac{1}{2}})$ if 
the true problem has multiple solutions (cf. \cite[p 186]{mainbook}). Therefore, we may choose the bandwidth according to $O(N^{-\frac{1}{2}-\epsilon})$ for $\epsilon>0.$
The question of what the best value for $h_N$ is remains open.

\section{Applications}
\label{s:applications}

Most notable examples, where our results hold are given by the linear regression models, LASSO, ridge regression models, binary classification, or risk estimation problems. We discuss some of them in this section.

\subsection{LASSO problem}
Consider a sample of size $N$ with an outcome $Y$ and explanatory variables comprised in an $m$-dimensional vector $X$. Then the objective of LASSO is to solve
\[
\min _{\beta_0,\beta\in\Rb^m } \sum _{i=1}^{N}\big(y_{i}-\beta _{0}-\beta^\top X^{i} \big)^{2}\; \text{ subject to }\;\sum _{j=1}^{m}|\beta_{j}|\leq t.
\]
One can view this formulation as the SAA for minimizing the expected 
squared error of the model $y=\beta_{0}+\beta^\top x$. 

Without loss of generality, we may consider $\beta_0=0$ and denote $\tilde{\beta}= (-1,\beta)$ and $\tilde{x}=(y,x)$ with $\tilde{X}^i=(Y^i,X^i,1)$, $i=1,\dots,N.$
The smoothed (with respect to the data) problem is the following:
\begin{equation}
\label{regression-smoothed}
\min _{\tilde{\beta}} \sum _{i=1}^{N}\int\big(\tilde{\beta}^\top \tilde{x} \big)^{2} K\Big(\frac{\tilde{X}^i-\tilde{x}}{h_N}\Big)\frac{1}{h_N^{m+1}}\,d\tilde{x}, \quad \text{ subject to }\;\sum _{j=1}^{m}|\beta_{j}|\leq t.
\end{equation}

We shall verify the assumptions of Theorem~\ref{t:kernel-bias-convex} and Theorem~\ref {t:K-consistency2}  for a given sample.  
Here 
\[
F(\tilde{\beta},\tilde{x}) = \big(\tilde{\beta}^\top \tilde{x} \big)^{2} 
.\]
In order to define the modulus of continuity of $F(\tilde{\beta},\tilde{x})$, we see that
\begin{align*}
|F(\tilde{\beta},\tilde{x} +z) &- F(\tilde{\beta},\tilde{x})| = 
|\tilde{\beta}^\top z \tilde{\beta}^\top (2\tilde{x} +z)|\\
& \leq \|\tilde{\beta}\|\| z \|\big(2\|\tilde{\beta}\| \|\tilde{x}\| +\|\tilde{\beta}\|\|z\|)\big) \leq c_1\|z\|(2c_1c_2 + c_1\|z\|).
\end{align*}
In the last expression the constants $c_1$ and $ c_2$ refer to the maximum of the norms of the vectors $\tilde{\beta}$ and $\|\tilde{x}\|$, respectively. Those maxima are finite due to the constraint in LASSO and the finite number of observation in the sample at hand. 
Therefore, we can define the modulus of continuity to be 
\[
w(t) = c_1^2t(2c_2 + t)\,\text{ for } t\geq 0.
\]
The function $w(\cdot)$ is non-decreasing for $t\geq 0$ and $\lim_{t\downarrow 0} w(t) =0.$
If we use a kernel $K$ satisfying (k1)-(k2), then $w$ is $K$-integrable. 
This verifies assumption (c1k). 
In this example, assumption (c2) requires the existence of an integrable envelope function for the
set $f_{\tilde{\beta}}(x) = \big(\tilde{\beta}^\top \tilde{x} \big)^{2}$. This assumption is satisfied due to the compactness of the set of all feasible vectors $\tilde{\beta}$ when the vector $X$ has a finite second moment. Assumptions (c3) and (c4) are evident and, hence, Theorem~\ref {t:K-consistency2} applies.

Furthermore, we easily see that $F(\tilde{\beta},\tilde{x}) = \big(\tilde{\beta}^\top \tilde{x} \big)^{2} $ is convex with respect to the second argument. This shows that Theorem~\ref{t:kernel-bias-convex} is applicable as well.

To solve the smoothed problem efficiently, we should be able to calculate the integrals easily and precisely, which can be done in special cases. For example,  we can easily implement smoothing by the normal kernel $K(z) = \frac{e^{-\frac{1}{2}z^\top A^{-1} z}}{\sqrt{(2\pi)^m|A|}} $, where $A$ is a positive definite covariance matrix with the identity matrix being a particular case. 

We observe that 
\begin{multline}
\label{regression-reg}
\int\big(\tilde{\beta}^\top \tilde{x} \big)^{2} K\Big(\frac{\tilde{X}^i-\tilde{x}}{h_N}\Big)\frac{1}{h_N^{m+1}}\,d\tilde{x} = \int\big(\tilde{\beta}^\top (\tilde{X}^i +h_Nz)\big)^{2} K(z)\,dz\\
 = \big(\tilde{\beta}^\top (\tilde{X}^i)\big)^{2} 
+ 2 h_N\tilde{\beta}^\top \tilde{X}^i \int \tilde{\beta}^\top z\,K(z)\,dz +
h_N^2 \int \big(\tilde{\beta}^\top z\big)^2\,K(z)\,dz\\
= \big(\tilde{\beta}^\top (\tilde{X}^i)\big)^{2}  +
h_N^2\int \big(\tilde{\beta}^\top z\big)^2\,K(z)\,dz
\end{multline}
The last equality follows by (k1). 
Thus, the integral represents the expected value of the random variable $W=(\tilde{\beta}^\top Z)^2$ when $Z$ is a normally distributed random vector with expected value 0 and covariance matrix $A$. Hence $W/\sigma^2$ with $\sigma^2 = \tilde{\beta}^\top A\tilde{\beta}$ is a $\chi^2$-random variable with one degree of freedom. 
Therefore, we continue \eqref{regression-reg} as follows:
\[
\int\big(\tilde{\beta}^\top \tilde{x} \big)^{2} K\Big(\frac{\tilde{X}^i-\tilde{x}}{h_N}\Big)\frac{1}{h_N^{m+1}}\,d\tilde{x}  = \big(\tilde{\beta}^\top (\tilde{X}^i)\big)^{2}  + h_N^2 \tilde{\beta}^\top A \tilde{\beta}
\]
We can summarize the observation in the following corollary. 
\begin{corollary}
The regression problem \eqref{regression-smoothed} with the normal kernel with covariance $A$ is equivalent to a regularization in the objective, i.e., 
\[
\min _{\beta\in\Rb^m } \sum _{i=1}^{N}\big(y_{i}-\beta^\top X^{i} \big)^{2}
+ \kappa \|(-1,\beta)\|_A^2\; \text{ subject to }\;\sum _{j=1}^{m}|\beta_{j}|\leq t.
\]	
\end{corollary}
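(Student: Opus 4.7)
The plan is to reduce the corollary to the explicit kernel integration already performed in \eqref{regression-reg}, and then to recast the resulting expression as a penalized least-squares problem; no new analytical tool beyond what precedes the statement is required. First, under the substitution $\tilde\beta=(-1,\beta)$ and $\tilde x=(y,x)$, one has $\tilde\beta^\top\tilde x = -y+\beta^\top x$, so for every fixed sample the identity $(\tilde\beta^\top \tilde X^i)^2 = (y_i - \beta^\top X^i)^2$ holds. This ties the data-fidelity part of the smoothed objective to the ordinary LASSO sum of squared residuals.

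The core step invokes \eqref{regression-reg} directly. For the normal kernel with covariance matrix $A$, the computation there shows that
\[
\int \big(\tilde\beta^\top \tilde x\big)^2 K\Big(\frac{\tilde X^i - \tilde x}{h_N}\Big)\frac{1}{h_N^{m+1}}\,d\tilde x = \big(\tilde\beta^\top \tilde X^i\big)^2 + h_N^2\,\tilde\beta^\top A\tilde\beta,
\]
where the linear cross term vanishes by the symmetry assumption (k1) on $K$ and the remaining integral $\int (\tilde\beta^\top z)^2 K(z)\,dz = \tilde\beta^\top A\tilde\beta$ is the second moment of the linear form $\tilde\beta^\top Z$ when $Z\sim\mathcal{N}(0,A)$. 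The decisive structural feature for the argument is that the extra term is independent of the sample index $i$, so summation produces a constant regularizer.

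Summing the identity over $i=1,\dots,N$ and recognizing $\|(-1,\beta)\|_A^2 = \tilde\beta^\top A\tilde\beta$ converts the objective of \eqref{regression-smoothed} into $\sum_{i=1}^N (y_i-\beta^\top X^i)^2 + Nh_N^2\,\|(-1,\beta)\|_A^2$, while the constraint $\sum_{j=1}^m |\beta_j|\le t$ is carried through unchanged; setting $\kappa:=Nh_N^2$ yields precisely the equivalent regularized problem in the statement. There is essentially no obstacle here: the only bookkeeping deserving attention is the dimensional embedding $\tilde\beta=(-1,\beta)$ and the identification of $\kappa$ in terms of $N$ and $h_N$. The integration itself, which would otherwise be the substantive step, has already been discharged in the text preceding the corollary, so the argument is a one-line consequence of \eqref{regression-reg} after summation.
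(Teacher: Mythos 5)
Your proof is correct and follows essentially the same route as the paper: expand $\big(\tilde\beta^\top(\tilde X^i+h_Nz)\big)^2$, annihilate the cross term by the symmetry condition (k1), and identify $\int(\tilde\beta^\top z)^2K(z)\,dz=\tilde\beta^\top A\tilde\beta$ as the second moment of $\tilde\beta^\top Z$ with $Z\sim\mathcal{N}(0,A)$ --- exactly the computation the paper carries out in \eqref{regression-reg} (phrased there via a $\chi^2$ variable) before summing over $i$. The one divergence is bookkeeping: since \eqref{regression-smoothed} is a sum rather than an average over $i$, your identification $\kappa = Nh_N^2$ is actually more careful than the paper's remark that $\kappa = h_N^2$, though this is immaterial to the stated equivalence because the corollary leaves $\kappa$ unspecified.
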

Here, we have set $\kappa=h_N^2$ and the norm is the weighted Euclidean norm.

\subsection{Classification problems}

We also examine an SVM problem, using the $L_1$ norm  of the classification error. 
For simplicity, we only discuss binary classification. 
We have labeled data from two classes: $m_1$ data points from $S_1$, denoted $x^j$, $j=1,\dots m_1$ and $m_2$ points from $S_2$,  denoted $y^j$, $j=1,\dots m_1$.
We have measured $n$ features and seek to determine a  linear classifier 
$\varphi:\R^n\to\R$ such that $\varphi(x) < 0$ for all $x\in S_1$,
and $\varphi(x)>0$ for all $y\in S_2$.  
The classifier is defined by setting 
$\varphi(z)= v^\top z -\gamma \text{ for any } z\in\mathbb R^n$. 
The following optimization problem determines the classifier
\begin{equation}
\label{eq:svmE1}
\begin{aligned}
\min_{v,\gamma}\, &\,  \frac{1}{m_1} \sum_{j=1}^{m_1} \max\{0, \langle v, x^j\rangle - \gamma \} + \frac{1}{m_2} \sum_{j=1}^{m_2} \max\{0, \gamma - \langle v, y^{j}\rangle\}  \\
\text{s. t. }\, & \, \|v\|=1.
\end{aligned}
\end{equation}
This problem is a sample average approximation for the problem of minimizing the expected misclassification.  
In a more general risk-sensitive setting, the problem is investigated in \cite{Dentcheva2019}.
The kernel-based problem has the following form
\begin{equation}
\label{eq:svm_kernel}
\begin{aligned}
\min_{v,\gamma}\, &\,  \frac{1}{m_1} \sum_{j=1}^{m_1} \int_{\Rb^n} \max\{0, \langle v, x^j + h_{m_1} z \rangle - \gamma \} K(z)\;dz\\
 &\qquad\qquad+ \frac{1}{m_2} \sum_{j=1}^{m_2} \int_{\Rb^n} \max\{0,\gamma - \langle v, y^{j}+ h_{m_2} z\rangle\} K(z)\;dz \\
\text{s. t. }\, & \, \|v\|=1.
\end{aligned}
\end{equation}
We shall verify that the problem \eqref{eq:svm_kernel} satisfies the conditions of 
Theorem~\ref {t:K-consistency2}  and of Theorem~\ref{t:kernel-bias-convex}.
We observe that this problem contains estimation of the sum of two expected values and does not fit perfectly in the general problem formulation of \eqref{p:kernel}. However, the necessary modification is straightforward.
Here 
\[
F_1(v,\gamma,x) = \max\{0, \langle v, x \rangle + \gamma\}\quad
F_2(v,\gamma,x) = \max\{0, \gamma-\langle v, x \rangle\}
\]
and we minimize $\Eb\big[F_1(v,\gamma,x)\big] + \Eb\big[F_2(v,\gamma,x)\big].$
The decision $v$ is in a compact space due to the restriction on its norm. Without loss of generality, we may assume that $\gamma$ is bounded in a compact interval $[-c,c]$ for sufficiently large constant $c>0$.
Both $F_1$ and $F_2$ are continuous with respect to the first argument and convex with respect to the second argument.
Integrable envelope functions for the
sets $f^1_{v,\gamma}(x) = \max\{0, \langle v, x \rangle+ \gamma\}$ and $f^2_{v}(x) = \max\{0, \gamma-\langle v, x \rangle\}$ exist due to the compactness of the feasible set and the integrability of $X$. 
It remains to consider the modulus of continuity of $F_i(v,x)$, $i=1,2$. We see that
\begin{align*}
|F_i(v,\gamma,x +z) - F_i(v,\gamma,x)| \leq |v^\top z|
\leq \|v\|\| z \|  \leq \|z\|,\quad i=1,2.
\end{align*}
In the last inequality, we have used the constraint on $v$.  
Therefore, the modulus of continuity  is $w(t)= t$ for all $t\geq 0.$
The necessary conditions on $w(\cdot)$ which are required in (c1k), are easy to check when the kernel $K$ satisfies (k1)-(k2).  
We conclude that Theorem~\ref {t:K-consistency2} and Theorem~\ref{t:kernel-bias-convex} are both applicable.

\subsection{Portfolio optimization with Average Value-at-Risk}
\label{avar-port}

Another example, in which our results are applicable is the following problem.
Let $X$ be a random vector, representing the random returns of $m$ securities and $u$ be a decision regarding allocation of the available capital $K$. The set $U$ comprises various restrictions
on our allocations, e.g.,
\[
U=\{ u\in \Rb^m :\; \sum_{i=1}^m u_i =K,\; l_i\leq u_i \leq  b_i\},
\]
where $l_i$ and $u_i$ are lower and upper bounds, respectively, for the investment in the $i$-th security.
 We may optimize a combination of the mean return with the Average Value-at-Risk at some level $\beta\in (0,1)$ to determine our portfolio. 
Our objective is to minimize the expected value of the function
\[
F(u,\eta,x) = -\kappa\langle u, x\rangle + (1-\kappa) \Big(-\eta + \frac{1}{\beta} \max\big\{0, \eta - \langle u, x\rangle\big\}\Big),
\] 
where $\kappa \in (0,1)$ and $\beta\in (0,1)$ is the chosen probability level for the Average Value at Risk. The optimization problem reads:
\[
\min_{u\in U, \eta\in\Rb} \Eb\big[F(u,\eta,X)\big].
\]
We can restrict $\eta$ to a properly chosen interval without affecting the optimal (unrestricted) solution since $\eta$ is the $(1-\beta)$- quantile of the distribution of $\langle u, X\rangle$ and $\beta\in (0,1).$  Therefore, we can assume that the compactness requirement in (c4) is satisfied. 

In order to define the modulus of continuity of $F(u,\eta,x)$, we see that
\begin{align*}
|F(u,\eta,x+z) & - F(u,\eta,x) |  = \\
&\Big|-\kappa\langle u, z\rangle + \frac{1-\kappa}{\beta}\Big( \max\big\{0, \eta - \langle u, x+z\rangle\big\} - \max\big\{0, \eta - \langle u, x\rangle\big\}\Big)\Big|\\
& \leq \kappa\|u\|\| z \| + \frac{1-\kappa}{\beta}|\langle u, z\rangle| \\
& \leq c\|z\|(\kappa+ \frac{1-\kappa}{\beta}).
\end{align*}
In the last expression, the constants $c$ is the maximum of the norm of $u\in U$. 
Therefore, we can define the modulus of continuity to be $w(t) = c(\kappa+ (1-\kappa)/{\beta})t$ for $t\geq 0.$
The necessary conditions on $w(\cdot)$ which are required in (c1k), are easy to see when the kernel $K$ satisfies (k1)-(k2). 
The existence of an integrable envelope function for the
set $f_{u,\eta}(x) = -\kappa\langle u, x\rangle + (1-\kappa) \big(-\eta + \frac{1}{\beta} \max\big\{0, \eta - \langle u, x\rangle\big\}\big)$ is ensured by the compactness of the feasible set and the integrability of $X$.  Hence, Theorem~\ref {t:K-consistency2} applies.
Furthermore, we easily see that $F(u,\eta, \cdot)$ is convex, which shows that Theorem~\ref{t:kernel-bias-convex} is applicable as well.

\section{Numerical Results}
\label{s:numerical_results}

We have selected to estimate the Average Value at Risk (AVaR$_\alpha$) from various distributions and drawing samples of different sizes. For a fix parameter $\alpha\in (0,1)$, in-line with our theoretical discussion, we denote the true value of AVaR$_{\alpha}$ by $\theta.$
We use the variational representation of Average Value at Risk to calculate it
\begin{equation}
    \theta= \min_{z \in \mathbb{R}}\left\{  z+ \frac{1}{\alpha}\Eb[{(X-z)}_{+}]\right\}.
\end{equation}
The kernel-based problem has the form:
\begin{equation}
    \theta_{K}^{(N)}= \min_{z \in \mathbb{R}} \left\{z +\frac{1}{\alpha} \sum_{i=1}^n \int \max \{0, x-z\}K(\frac{x-X_i}{h_m})dx \right\}
\end{equation}

In the first sequence of experiments, we have generated observation $X_i$, $i=1,\cdots,N$ from  a normal distribution $\mathcal{N}(10,3)$ and have selected values of $\alpha=0.05$ and $\alpha=0.2$. The true value of $\theta$ for the normal distribution $\mathcal{N}(10,3)$ is $10 + \frac{\sqrt{3} e^{-1.36}}{0.05\sqrt{2\pi}}$ when $\alpha=0.05$  and $10 + \frac{\sqrt{3} e^{-0.36}}{0.2\sqrt{2\pi}}$, and $\alpha=0.2.$ 
First, we use the uniform kernel:  
\[
K(z)=\frac{1}{2h_N}\quad\text{with support } |z| \leq h_N.
\]
We solve the SAA problem and the kernel-based problem for $1000$ replications of samples with size $N$. Using the calculated optimal values, we estimate the bias and the variance of $\theta^{(N)}_{K}$, $\theta^{(N)}_{SAA}$.
In these experiments, we solve the kernel-based problems by setting the bandwidth according to the Sheather-Jones method (denoted S--J Rule), the Silverman's rule (denoted S Rule), and for the values 0.5, 0.35, 0.2, and 0.05.
The best choice of bandwidth in terms of bias is shown in boldface. Notice that for those choice, the variance of may not be the best but it is always smaller than the variance of SAA. 
The results are shown in the following tables.\\

\begin{center}
Bias\\ \vspace*{1ex}
{\small
\!\!\!\begin{tabular}{|c| c |c | c| c | c |c| c|}
 \hline\diagbox{N}{$h_N$} & S Rule & S--J Rule & 0.5  & 0.35 & 0.2 & 0.05 & SAA\\ 
 \hline
 100 &  \textbf{-0.0003} &0.0301  &-0.0278& -0.0563& -0.0764& -0.0870 & -0.0878 \\
 200 &  0.0218 &  0.0446&\textbf{0.0134}& -0.0133& -0.0316& -0.0411 & -0.0419 \\
 500 & 0.0248 &0.0403& 0.0345& 0.0090& \textbf{-0.0078}& -0.0160 & -0.0167\\
  \hline
\end{tabular}
}
\end{center}
\bigskip

\begin{center}
Variance\\ \vspace*{1ex}
{\small
\begin{tabular}{|c| c |c | c| c | c |c|c|}
 \hline\diagbox{N}{$h_N$}& S Rule & S--J Rule & 0.5  & 0.35 & 0.2 & 0.05 & SAA\\ 
 \hline
100 & 0.1726& 0.1698 &0.1717& 0.1768&  0.1806& 0.1828 & 0.1830 \\
200 &  0.0882& 0.0872& 0.0867 & 0.0891&  0.0908& 0.0916 & 0.0917 \\
500 &  0.0396 & 0.0393&0.0385& 0.0396&  0.0402& 0.0406 &  0.0406\\
  \hline
\end{tabular}
}
\end{center}
\bigskip

We can see that  Sheather-Jones method for selecting the bandwidth does not always work well in the context of minimization. Perhaps the bandwidth is too large when the sample size increases. The Silverman’s rule works much better in terms of bias reduction. It also has smaller variance than the SAA.  
When we reduce the bandwidth from $0.5$ to $0.05$, the bias shifts from a small positive number to a negative one approaching the bias of SAA. At the same time, the variances of the estimators do not increase. In this case, when we choose bandwidth from $0.05$ to $0.35$, the kernel estimator can reduce the bias. 


In the second experiment, we choose the Epanechnikov kernel function 
\[
K(z)=\frac{3}{4}(1-z^2),\quad\text{with support }|z| \leq h_N.
\]
We solve the SAA problem and the kernel-based problem for $1000$ replications of samples with size $N$. Using the results, we estimate the bias and the variance of $\theta^{(N)}_{K}$, $\theta^{(N)}_{SAA}$. We solve the kernel-based problems by choosing the bandwidth in the same way as in the previous experiments.
The results are shown in the following tables.\\

\begin{center}
Bias\\ \vspace*{1ex}
{\small
\begin{tabular}{|c | c  |c | c| c | c |c| c|}
 \hline \diagbox{N}{$h_N$}& S Rule & S--J Rule & 0.5  & 0.35 & 0.2 & 0.05 & SAA\\ 
 \hline
 100 &   -0.0326 &\textbf{-0.0135}& -0.0496& -0.0679& -0.0807& -0.0873 &-0.0878\\ 
 200 &  \textbf{-0.0023 }&0.0118& -0.0074& -0.024& -0.0354& -0.0414  &-0.0419\\
 500 & 0.0087 &0.0181& 0.0146& \textbf{-0.0009}& -0.0112& -0.0163 &-0.0167\\
  \hline
\end{tabular}
}
\end{center}
\bigskip
\begin{center}
Variance\\ \vspace*{1ex}
{\small
\begin{tabular}{|c |c |c | c| c | c |c| c|}
 \hline \diagbox{N}{$h_N$}& S Rule & S--J Rule & 0.5  & 0.35 & 0.2 & 0.05 & SAA\\ 
 \hline
 100 &   0.1760& 0.1740& 0.1761& 0.1790& 0.1814& 0.1829 &0.1830\\ 
 200 &  0.0895& 0.0888& 0.0886& 0.0901& 0.0912& 0.0916 &0.0917\\ 
 500 & 0.0400 &0.0398& 0.0393& 0.04& 0.0404& 0.0406 &0.0406\\ 
  \hline
\end{tabular}
}
\end{center}

\bigskip

In the third experiment, we consider the $\alpha=0.2$. We use the uniform and Epanechnikov kernel functions and the choice of bandwidth as in the  previous experiments. 
The results of uniform kernel are shown in the following tables.\\

\begin{center}
Bias\\ \vspace*{1ex}
{\small
\begin{tabular}{|c |c  |c | c| c | c |c| c|}
 \hline \diagbox{N}{$h_N$} & S Rule & S--J Rule  & 0.5  & 0.35 & 0.2 & 0.05 & SAA\\ 
 \hline
 100 &  0.0123& 0.0324& \textbf{-0.0057}& -0.0237& -0.0359& -0.040 &-0.0425\\ 
 200 & 0.0298& 0.0448& 0.0243& 0.0071& \textbf{-0.0043}& -0.0098  &-0.0103\\ 
 500 & 0.0224& 0.0328& 0.0289& 0.0118& \textbf{0.0007}& -0.0045 &-0.0049\\ 
  \hline
\end{tabular}
}
\end{center}
\bigskip

\begin{center}
Variance\\ \vspace*{1ex}
{\small
\begin{tabular}{|c | c |c | c| c | c |c| c|}
 \hline \diagbox{N}{$h_N$}& S Rule& S--J Rule  & 0.5  & 0.35 & 0.2 & 0.05 & SAA\\ 
 \hline
 100 &   0.07& 0.07& 0.0683& 0.0692& 0.0698& 0.0701 &0.0701\\ 
 200 & 0.0341& 0.0341&0.0332& 0.0336& 0.0338& 0.0339  &0.0339\\ 
 500 & 0.0153& 0.0153& 0.0149& 0.0151& 0.0152& 0.0153 &0.0153\\ 
  \hline
\end{tabular}
}
\end{center}
\bigskip

This experiment shows that the Sheather-Jones method and Silverman’s rule may not perform well in the context of optimization. The SAA method performs better than these two rules when the sample size is $200$ or $500$. However, we observe that the kernel method has less bias than the SAA method when the bandwidth decreases from $0.5$ to $0.05.$
The results for the experiments with the Epanechnikov kernel are reported below.

\begin{center}
Bias\\ \vspace*{1ex}
{\small
\begin{tabular}{|c | c |c | c| c | c |c| c|}
 \hline \diagbox{N}{h} & S Rule& S--J Rule  & 0.5  & 0.35 & 0.2 & 0.05 & SAA\\ 
 \hline
 100 &  -0.0088& \textbf{0.0035}& -0.0198& -0.0303& -0.0384& -0.0422 &-0.0425\\ 
 200 &  0.0142& 0.0233& 0.0111& \textbf{0.0004}& -0.0066& -0.01  &-0.0103\\ 
 500 &  0.0119& 0.0179& 0.0155& 0.0052& \textbf{-0.0015}& -0.0046 &-0.0049\\ 
  \hline
\end{tabular}
}
\end{center}
\bigskip
\begin{center}
Variance\\ \vspace*{1ex}
{\small
\begin{tabular}{|c | c  |c | c| c | c |c| c|}
 \hline \diagbox{N}{h} &  S Rule & S--J Rule  & 0.5  & 0.35 & 0.2 & 0.05 & SAA\\ 
 \hline
 100 &  0.0700& 0.0700& 0.069& 0.0693& 0.0699& 0.0701& 0.0701\\ 
 200 &  0.0340& 0.0341& 0.0336& 0.0337& 0.0339& 0.0339&0.0339\\ 
 500 & 0.0153& 0.0153& 0.0151& 0.0152& 0.0152& 0.0153&0.0153\\ 
  \hline
\end{tabular}
}
\end{center}
\bigskip

From this experiment,  we can see that different kernels influence the performance of the bias reduction. When we choose the Epanechnikov kernel, as the sample size becomes larger, the reduction becomes more robust.
For example, when $N=500$ and the Epanechnikov kernel is employed, 
the bias ranges from $0.0155$ to $-0.0046$ when the bandwidth decreases from $0.5$ to $0.05$. In the same situations, the use of the uniform kernel results in a bias range from $0.0289$ to $-0.0045$.

The experiments show that the bandwidth is still a free parameter of the kernel estimation. At this time, we cannot give a precise recommendation for its best value based on our model. However, our theoretical results suggest to choose  reasonable small bandwidth to reduce the bias. Furthermore, we observe that smoothing does not increase the variance. 

\section{Conclusions}

We have discussed using smoothing with respect to the data in stochastic optimization problems of basic form, instead of the empirical measure. We have analyzed the consistency of the optimal value and its bias as an estimator. We have shown that smoothing reduces bias for many problems where convexity of the objective with respect to the data is present. The price to pay is a more complex estimator requiring to calculate integrals quickly and efficiently. As demonstrated in the discussion of regression problems, the choice of a smoothing measure may alleviate this difficulty depending on the application.  Further analysis and numerical experience will explore the limitations of this proposal. We believe that the effect of bias reduction is most essential for stochastic optimization problems related to cost and risk minimization in particular for practical situations where high-dimensional problems arise with  insufficient data. In such situations, precise evaluation of the optimal value becomes  more important.  

\bigskip

\noindent
{\small {\bf Acknowledgement:} The authors thank the associate editor and the two anonymous referee whose remarks helped improve the paper.


\begin{thebibliography}{10}

\bibitem{bayraksan2006assessing}
G{\"u}zin Bayraksan and David~P Morton.
\newblock Assessing solution quality in stochastic programs.
\newblock {\em Mathematical Programming}, 108(2-3):495--514, 2006.

\bibitem{bayraksan2011sequential}
G{\"u}zin Bayraksan and David~P Morton.
\newblock A sequential sampling procedure for stochastic programming.
\newblock {\em Operations Research}, 59(4):898--913, 2011.

\bibitem{dentcheva2017statistical}
Darinka Dentcheva, Spiridon Penev, and Andrzej Ruszczy{\'n}ski.
\newblock Statistical estimation of composite risk functionals and risk
  optimization problems.
\newblock {\em Annals of the Institute of Statistical Mathematics},
  69(4):737--760, 2017.

\bibitem{DenRom}
Darinka Dentcheva and Werner R{\"o}misch.
\newblock Differential stability of two-stage stochastic programs.
\newblock {\em SIAM Journal on Optimization}, 11(1):87--112, 2000.

\bibitem{duchi2012randomized}
John~C Duchi, Peter~L Bartlett, and Martin~J Wainwright.
\newblock Randomized smoothing for stochastic optimization.
\newblock {\em SIAM Journal on Optimization}, 22(2):674--701, 2012.

\bibitem{dupacova1988asymptotic}
Jitka Dupacov{\'a} and Roger Wets.
\newblock Asymptotic behavior of statistical estimators and of optimal
  solutions of stochastic optimization problems.
\newblock {\em The annals of statistics}, pages 1517--1549, 1988.

\bibitem{einmahl2005uniform}
Uwe Einmahl, David~M Mason, et~al.
\newblock Uniform in bandwidth consistency of kernel-type function estimators.
\newblock {\em The Annals of Statistics}, 33(3):1380--1403, 2005.

\bibitem{ermoliev2013sample}
Yuri~M Ermoliev and Vladimir~I Norkin.
\newblock Sample average approximation method for compound stochastic
  optimization problems.
\newblock {\em SIAM Journal on Optimization}, 23(4):2231--2263, 2013.

\bibitem{ermoliev1995minimization}
Yuri~M Ermoliev, Vladimir~I Norkin, and Roger~JB Wets.
\newblock The minimization of semicontinuous functions: mollifier subgradients.
\newblock {\em SIAM Journal on Control and Optimization}, 33(1):149--167, 1995.

\bibitem{gine2004weighted}
Evarist Gin{\'e}, Vladimir Koltchinskii, Joel Zinn, et~al.
\newblock Weighted uniform consistency of kernel density estimators.
\newblock {\em The Annals of Probability}, 32(3B):2570--2605, 2004.

\bibitem{tucl}
Evarist Gin{\'e} and Richard Nickl.
\newblock Uniform central limit theorems for kernel density estimators.
\newblock {\em Probability Theory and Related Fields}, 141(3-4):333--387, 2008.

\bibitem{gine2016mathematical}
Evarist Gin{\'e} and Richard Nickl.
\newblock {\em Mathematical foundations of infinite-dimensional statistical
  models}, volume~40.
\newblock Cambridge University Press, 2016.

\bibitem{growe1992stochastic}
Nicole Gr{\"o}we and Werner Roemisch.
\newblock A stochastic programming model for optimal power dispatch: Stability
  and numerical treatment.
\newblock In {\em Stochastic Optimization}, pages 111--139. Springer, 1992.

\bibitem{guigues2012sampling}
Vincent Guigues and Werner R{\"o}misch.
\newblock Sampling-based decomposition methods for multistage stochastic
  programs based on extended polyhedral risk measures.
\newblock {\em SIAM Journal on Optimization}, 22(2):286--312, 2012.

\bibitem{robi:99}
G{\"u}l G{\"u}rkan, A~Yonca~{\"O}zge, and Stephen~M Robinson.
\newblock Sample-path solution of stochastic variational inequalities.
\newblock {\em Mathematical Programming}, 84(2), 1999.

\bibitem{heitsch2006stability}
Holger Heitsch, Werner R{\"o}misch, and Cyrille Strugarek.
\newblock Stability of multistage stochastic programs.
\newblock {\em SIAM Journal on Optimization}, 17(2):511--525, 2006.

\bibitem{kall1974approximations}
Peter Kall.
\newblock Approximations to stochastic programs with complete fixed recourse.
\newblock {\em Numerische Mathematik}, 22(4):333--339, 1974.

\bibitem{kankova1974optimum}
Vlasta Kankov{\'a}.
\newblock Optimum solution of a stochastic optimization problem with unknown
  parameters.
\newblock In {\em Trans. 7th Prague Conf}, pages 239--244, 1974.

\bibitem{kavnkova1978approximative}
Vlasta Ka{\v{n}}kov{\'a}.
\newblock An approximative solution of a stochastic optimization problem.
\newblock In {\em Transactions of the Eighth Prague Conference}, pages
  349--353. Springer, 1978.

\bibitem{king:90}
Alan~J King and Roger~JB Wets*.
\newblock Epi-consistency of convex stochastic programs.
\newblock {\em Stochastics and Stochastic Reports}, 34(1-2):83--92, 1991.

\bibitem{kleywegt2002sample}
Anton~J Kleywegt, Alexander Shapiro, and Tito Homem-de Mello.
\newblock The sample average approximation method for stochastic discrete
  optimization.
\newblock {\em SIAM Journal on Optimization}, 12(2):479--502, 2002.

\bibitem{lachout2005strong}
Petr Lachout, Eckhard Liebscher, and Silvia Vogel.
\newblock Strong convergence of estimators as $\varepsilon$ n-minimisers of
  optimisation problemsof optimisation problems.
\newblock {\em Annals of the Institute of Statistical Mathematics},
  57(2):291--313, 2005.

\bibitem{mak1999monte}
Wai-Kei Mak, David~P Morton, and R~Kevin Wood.
\newblock Monte carlo bounding techniques for determining solution quality in
  stochastic programs.
\newblock {\em Operations research letters}, 24(1-2):47--56, 1999.

\bibitem{norkin1992convergence}
VI~Norkin.
\newblock Convergence of the empirical mean method in statistics and stochastic
  programming.
\newblock {\em Cybernetics and Systems Analysis}, 28(2):253--264, 1992.

\bibitem{norkin1998branch}
Vladimir~I Norkin, Georg~Ch Pflug, and Andrzej Ruszczy{\'n}ski.
\newblock A branch and bound method for stochastic global optimization.
\newblock {\em Mathematical programming}, 83(1-3):425--450, 1998.

\bibitem{pflug2016empirical}
Georg~Ch Pflug and Alois Pichler.
\newblock From empirical observations to tree models for stochastic
  optimization: convergence properties.
\newblock {\em SIAM Journal on Optimization}, 26(3):1715--1740, 2016.

\bibitem{pflug1998glivenko}
Georg~Ch Pflug, Andrzej Ruszczynski, and R{\"u}diger Schultz.
\newblock On the {G}livenko-{C}antelli problem in stochastic programming:
  Linear recourse and extensions.
\newblock {\em Mathematics of Operations Research}, 23(1):204--220, 1998.

\bibitem{Pang2019}
Zhengling Qi, Ying Cui, Yufeng Liu, and Jong-Shi Pang.
\newblock Statistical analysis of stationary solutions of coupled nonconvex
  nonsmooth empirical risk minimization.
\newblock {\em arXiv preprint arXiv:1910.02488}, 2019.

\bibitem{E-consistency}
Dragan Radulovi{\'c} and Marten Wegkamp.
\newblock Necessary and sufficient conditions for weak convergence of smoothed
  empirical processes.
\newblock {\em Statistics \& probability letters}, 61(3):321--336, 2003.

\bibitem{robi:96}
Stephen~M Robinson.
\newblock Analysis of sample-path optimization.
\newblock {\em Mathematics of Operations Research}, 21(3):513--528, 1996.

\bibitem{romisch2003stability}
Werner R{\"o}misch.
\newblock Stability of stochastic programming problems.
\newblock {\em Handbooks in operations research and management science},
  10:483--554, 2003.

\bibitem{shapir:00}
Alexander Shapiro.
\newblock Statistical inference of stochastic optimization problems.
\newblock In {\em Probabilistic constrained optimization}, pages 282--307.
  Springer, 2000.

\bibitem{mainbook}
Alexander Shapiro, Darinka Dentcheva, and Andrzej Ruszczy{\'n}ski.
\newblock {\em Lectures on stochastic programming: modeling and theory}.
\newblock SIAM, 2014.

\bibitem{sheather2004density}
Simon~J Sheather.
\newblock Density estimation.
\newblock {\em Statistical science}, pages 588--597, 2004.

\bibitem{Silverman1978weak}
Bernard~W Silverman.
\newblock Weak and strong uniform consistency of the kernel estimate of a
  density and its derivatives.
\newblock {\em The Annals of Statistics}, pages 177--184, 1978.

\bibitem{tsybakov2008introduction}
Alexandre~B Tsybakov.
\newblock {\em Introduction to nonparametric estimation}.
\newblock Springer Science \& Business Media, 2008.

\bibitem{Dentcheva2019}
Constantine~Alexander Vitt, Darinka Dentcheva, and Hui Xiong.
\newblock Risk-averse classification.
\newblock {\em Annals of Operations Research}, pages 1--35, 2019.

\bibitem{wied2012consistency}
Dominik Wied and Rafael Wei{\ss}bach.
\newblock Consistency of the kernel density estimator: a survey.
\newblock {\em Statistical Papers}, 53(1):1--21, 2012.

\end{thebibliography}
\end{document}